\newcommand{\dist}{\text{dist}} 
\newcommand{\diver}{\text{\rm div}}
\newcommand{\R}{{\mathbb R}} 
\newcommand{\C}{{\mathbb C}}
\newcommand{\Z}{{\mathbb Z}}
\newcommand{\E}{{\mathbf E}}
\newcommand{\D}{{\mathbf D}}
\newcommand{\B}{{\mathbf B}}
\newcommand{\J}{{\mathbf J}}
\renewcommand{\H}{{\mathbf H}}
\newcommand{\G}{{\mathbf G}}
\newcommand{\n}{{\mathbf n}}
\newcommand{\m}{{\mathbf m}}
\renewcommand{\k}{{\mathbf k}} 
\renewcommand{\i}{{\rm i}}
\newcommand{\p}{\varphi}
\renewcommand{\(}{\left(}
\renewcommand{\)}{\right)}
\newtheorem{theorem}{Theorem}[section]
\newtheorem{corollary}[theorem]{Corollary}
\newtheorem{lemma}[theorem]{Lemma}
\newtheorem{proposition}[theorem]{Proposition}
\newtheorem{definition}[theorem]{Definition}
\newtheorem{remark}[theorem]{Remark}
\newcommand{\cristian}[1]{\textcolor{black}{#1}}
\begin{document}

\title[Generalized Snell's law and Maxwell equations]{Generalized Snell's law and Maxwell equations}
\author[C. E. Guti\'errez and A. Sabra]{Cristian E. Guti\'errez and Ahmad Sabra\\\today}
\providecommand{\keywords}[1]{\textbf{\textit{Index terms---}} #1}
\thanks{2020 AMS Math Subject Classification: 78A40, 35Q61, 35D30.}
\thanks{
C.E.G was partially supported by NSF Grant DMS-1600578, and A. S. was partially supported by the University Research Board, Grants 104107 and 104631 from the American University of Beirut. \today}
\address{Department of Mathematics\\Temple University\\Philadelphia, PA 19122}
\email{gutierre@temple.edu}
\address{Department of Mathematics \\ American University of Beirut\\ Beirut, Lebanon}
\email{asabra@aub.edu.lb}

\begin{abstract}
This paper examines the Maxwell system of electrodynamics within the framework of distributions. 
A primary objective is to establish general boundary conditions for fields at interfaces when the charge and current densities are measures localized on the interface. From this analysis, the paper presents a derivation of the generalized Snell’s law,  along with formulas for the amplitudes of the reflected and transmitted waves in terms of the incident amplitude.

\end{abstract}

\keywords{Electromagnetism, Generalized functions, Generalized Snell's law, Metasurfaces}

\maketitle

\tableofcontents

\section{Introduction}

Metasurfaces or metalenses are ultra-thin layers built with nano-materials that can steer light in unconventional ways. 
In beam shaping, the subject of metasurfaces  is a rapidly growing area of research with diverse practical applications. 
Central to this field is the generalized Snell's law of refraction and reflection, which explains how beams propagate across metasurfaces.  
The law was introduced in the influential works \cite{gaburrocapasso:generalizedsnelllaw2011} and \cite{aietacapasso2012} for planar geometries. Its formulation involves a function defined in a small neighborhood of the metasurface, called the phase discontinuity, and is further discussed in \cite{capasso-yu:2014}.  
A rigorous mathematical derivation of the law for non-planar geometries was first obtained using wave fronts in \cite[Sect. 3]{Gutierrez:17} and later by applying the Fermat principle of least action in \cite[Sect. 2]{GUTIERREZ2021102134}.  
These works also demonstrate the existence of phase discontinuities for various geometric configurations and multiple applications.

Let us recall precisely the generalized Snell's law in vector form.  
Given a surface $\Gamma$ separating two media $I$ and $II$ with refractive indices $n_1$ and $n_2$ respectively, and $f$ a phase function defined on $\Gamma$, the generalized Snell's law of refraction states that if a wave in medium $I$ with unit direction vector $\k_i$ strikes $\Gamma$ at a point $P$, then the wave is refracted into medium $II$ with unit direction vector $\k_t$ satisfying
\begin{equation}\label{eq:generalized snell law}
n_1\,\k_i - n_2\,\k_t = \lambda\,\n(P) + \nabla f(P)
\end{equation}
where $\n(P)$ is the unit normal to $\Gamma$ at $P$, and $\lambda \in \R$, see \cite[Equation (2.4)]{GUTIERREZ2021102134}.  
On the other hand, a wave reflected back into medium $I$ has unit direction vector $\k_r$ and satisfies the generalized law of reflection
\begin{equation}\label{eq:generalized snell law reflection}
n_1\,\k_i - n_1\,\k_r = \lambda'\,\n(P) + \nabla f(P).
\end{equation}

A primary goal of this paper is to explore Maxwell's equations from a distributional perspective and derive relationships between the electric and magnetic fields on either side of a boundary or interface—that is, boundary conditions—when the current and charge densities are measures concentrated on the interface.  
While Maxwell's equations are well understood in the classical sense, analyzing them in the setting of generalized functions (or distributions) becomes essential when dealing with discontinuous fields across surfaces; see, for example, \cite{2011idemen:book} and \cite{2017gutierrez:invariance}.  
The main result of this part is Theorem \ref{thm:general boundary conditions for Maxwell equations}, which we believe has independent interest, \cristian{since the discontinuities of the fields are assumed to be measures concentrated on the interface, a novel assumption in this generality.}

Using this analysis, the second objective of the paper is to derive the generalized laws of refraction and reflection directly from Maxwell’s equations, \cristian{which is a novel approach in the metasurface setting.}
To achieve this, we propose representing the transmitted and reflected electric fields as nonlinear waves incorporating the phase discontinuity.  
This representation—detailed in equations \eqref{eq:proposed form of the electric field} and \eqref{eq:proposed form of the electric field back}—enables us to deduce the generalized Snell’s laws from the boundary conditions established in Theorem \ref{thm:general boundary conditions for Maxwell equations}.  
Because the electric fields must satisfy the Maxwell system, this imposes constraints on the phase discontinuity, and the main result of this part is Theorem \ref{thm:MAIN THEOREM GSL}.
Using the boundary conditions obtained, a third objective is to calculate the amplitudes of the transmitted and reflected waves in terms of incident wave, Proposition \ref{prop:formulas for amplitudes}.  \cristian{This is possible under the necessary and sufficient condition \eqref{eq:solvability condition involving A3i} between the amplitude of the incident wave and the wave vectors. This is also a new result in the paper.}

To place the results in broader context, it is worth noting that metasurfaces that refract or reflect beams according to prescribed energy patterns are closely connected to Monge–Ampère type partial differential equations, as discussed in \cite{gutierrez-pallucchini:metasurfacesandMAequations}.  
The analysis of chromatic aberration in metalenses is carried out in \cite{GUTIERREZ2021102134}, with further insights available in \cite{gaburrocapasso:generalizedsnelllaw2011}.  
For applications related to tunable metasurfaces using graphene, see \cite{biswas-gutierrez-low-2018}.  
Recent developments and applications in the field can also be found in \cite{rousseaufelbacq2020}, \cite{Ji:2023aa:advancesinmetasurfacesapplications}, and \cite{Yang:2023aa:moreapplications}.

The paper is organized as follows. In Section \ref{sec:preliminaries}, \cristian{we recall results concerning distributions, outline the assumptions on the fields, and establish formulas needed later.} These are applied in Section \ref{sec:maxwell and boundary conditions} to prove Theorem \ref{thm:general boundary conditions for Maxwell equations}, \cristian{which provides general boundary conditions.}  

The derivation of the generalized Snell law \cristian{from the Maxwell's system} is the subject of Section \ref{sec:generalized snell law proof}, where we employ Theorem \ref{thm:general boundary conditions for Maxwell equations}.  
In Section \ref{sec:boundary conditions for magnetic fields}, we obtain boundary conditions for the magnetic fields, which, combined with the previously derived conditions, are used in Section \ref{sec:calculation of the amplitudes of the reflected and transmitted fields BIS} to derive explicit formulas for the wave amplitudes.
\cristian{Finally, the Appendix \ref{app:proof of exponential lemma} contains a proof of the exponential Lemma \ref{lm:exponential gradient}.}

\section{Preliminaries}\label{sec:preliminaries}

\cristian{In this section, we revisit concepts related to distributions that are essential for analyzing the Maxwell system in this context. Specifically, we derive the formulas that are subsequently utilized to establish general boundary conditions in Theorem \ref{thm:general boundary conditions for Maxwell equations}. These formulas are presented in Propositions \ref{prop:fomulas for curl and divergence} and \ref{thm:representation formula for dg/dt}.
}

Let $\Omega\subset \R^3$ be an open and bounded domain. 
A generalized function or distribution in $\Omega$ is a complex-valued continuous linear functional defined in the class of test functions $\mathcal D(\Omega)=C_0^\infty(\Omega)$ that are infinitely differentiable in $\Omega$ having compact support in $\Omega$ . As usual, $\mathcal D'(\Omega)$ denotes the class of distributions in $\Omega$ \cite{schwartz:theoriedesdistributions}. 
If $g\in \mathcal D'(\Omega)$, then, \cristian{as usual,} $\langle g,\p \rangle$ denotes the value of the distribution $g$ on the test function $\p\in \mathcal D(\Omega)$.

We say that ${\bf G}=(G_1,G_2,G_3)$ is a vector valued distribution in $\Omega$ if each component $G_i\in \mathcal D'(\Omega)$, $1\leq i\leq 3$.
The divergence of ${\bf G}$ is the scalar distribution defined by
\begin{equation}\label{form:divergence}
\langle \nabla \cdot {\bf G}, \varphi\rangle
=-\sum_{i=1}^3 \langle G_i, \varphi_{x_i} \rangle,
\end{equation}
and the curl of ${\bf G}$ is the vector valued distribution in $\Omega$ defined by
\begin{equation}\label{form:curl}
\langle \nabla \times {\bf G},\varphi\rangle=\left(\langle G_2,\varphi_{x_3}\rangle-\langle G_3,\varphi_{x_2}\rangle\right){\bf i}-\left(\langle G_1,\varphi_{x_3}\rangle-\langle G_3,\varphi_{x_1}\rangle\right){\bf j}+\left(\langle G_1,\varphi_{x_2}\rangle-\langle G_2,\varphi_{x_1}\rangle\right){\bf k}.
\end{equation}
Then it follows that
\begin{equation}\label{eq:div of curl zero}
\nabla \cdot (\nabla\times {\bf G})=0,
\end{equation} 
in the sense of distributions.
When the distribution ${\bf G}=(G_1,G_2,G_3)$ is locally integrable in $\Omega$ we obtain from \eqref{form:divergence}, and \eqref{form:curl} that 
\begin{equation*}
\langle \nabla \cdot {\bf G},\varphi\rangle=-\int_{\Omega}{\bf G}\cdot \nabla \varphi\,dx,\qquad
\langle \nabla \times {\bf G},\varphi\rangle=\int_{\Omega} {\bf G}\times \nabla \varphi\,dx.
\end{equation*}

We consider the following configuration. $\Omega$ is a smooth open and bounded domain in $\mathbb R^3$ and $\Gamma$ is a smooth surface that splits $\Omega$ into two disjoint open parts $\Omega_+$ and $\Omega_-$, i.e., $\Omega=\Omega_-\cup \Gamma \cup \Omega_+$, as follows: for every $x_0\in \Gamma$ there exists a ball $B(x_0,r)\subset \Omega$ and $\gamma \in C^1\(B(x_0,r)\)$  such that
\begin{align*}
\Omega_-\cap B(x_0,r)&=\{(x_1,x_2,x_3)\in B(x_0,r): x_3<\gamma(x_1,x_2)\}\\
 \Omega_+\cap B(x_0,r)&=\{(x_1,x_2,x_3)\in B(x_0,r): x_3>\gamma(x_1,x_2)\}.
 \end{align*}
 
 \begin{figure}[h]
\centering
 \begin{tikzpicture}[scale=0.8, line cap=round, line join=round]

\definecolor{pluscol}{RGB}{230,120,40}
\definecolor{minuscol}{RGB}{70,130,200}
\definecolor{gammacol}{RGB}{200,30,30}

\fill[minuscol!30]
(-3.2,1.68)
.. controls (-1.8,0.7) and (0.2,0.55) .. (1.3,0.95)
.. controls (2.2,1.25) and (2.8,1.45) .. (3.2,1.68)
arc[start angle=36.87,end angle=143.13,x radius=4,y radius=2.8];

\fill[pluscol!30]
(-3.2,1.68)
.. controls (-1.8,0.7) and (0.2,0.55) .. (1.3,0.95)
.. controls (2.2,1.25) and (2.8,1.45) .. (3.2,1.68)
arc[start angle=36.87,end angle=-36.87,x radius=4,y radius=2.8]
arc[start angle=-36.87,end angle=-216.87,x radius=4,y radius=2.8];

\draw[thick,dashed] (0,0) ellipse (4 and 2.8);

\draw[very thick, gammacol]
(-3.2,1.68)
.. controls (-1.8,0.7) and (0.2,0.55) .. (1.3,0.95)
.. controls (2.2,1.25) and (2.8,1.45) .. (3.2,1.68);

\node at (3,2.35) {$\Omega$};
\node at (0,-1.95) {$\Omega_-$};
\node at (0,2.0) {$\Omega_+$};
\node at (0.55,1) {{\color{red} $\Gamma$}};
\end{tikzpicture}

\caption{$\Omega=\Omega_-\cup \Gamma\cup \Omega_+.$}

\label{fig:interface_domain}

\end{figure}

We are given fields ${\bf G}_-$ in $\Omega_-$ and ${\bf G}_+$ in $\Omega_+$ satisfying the following
properties
\begin{enumerate}[label=(F\arabic*)]
\item\label{condition 1}  ${\bf G}_{-}\in C^1(\Omega_-), {\bf G}_+ \in C^1(\Omega_+)$.
\item\label{condition 2} The first order derivatives of ${\bf G}_\pm$ are in $L^1(\Omega_\pm)$, respectively.
\item\label{condition 3}  For every $x\in \Gamma$, $\lim_{y\to x, y\in \Omega_-} {\bf G}_{-}(y)$ and $\lim_{y\to x, y\in \Omega_+} {\bf G}_{+}(y)$ exist and are finite.
\end{enumerate}
As a consequence, each $\G_-$ and $\G_+$ can be extended continuously to $\Gamma$ by setting 
\begin{equation*}\G_-(x)=\lim_{y\to x, y\in \Omega_-} {\bf G}_{-}(y);\qquad  
\G_+(x)=\lim_{y\to x, y\in \Omega_+} {\bf G}_{+}(y),
\end{equation*} 
for each $x\in \Gamma$. 
For such fields ${\bf G_-}$ and ${\bf G_+}$, the linear functional $\G$ given by 
\begin{equation}\label{eq:definition of bold G}
\langle \G,\varphi\rangle=\int_{\Omega_-}\G_-(x)\,\varphi(x)\,dx+\int_{\Omega_+}\G_+(x)\,\varphi(x)\,dx
\end{equation}
is a well defined distribution for $\p\in \mathcal D (\Omega)$.
The jump of the fields in $\Gamma$ is defined by
\begin{equation*}\label{eq:jump of fields}
[[{\bf G}(x)]]={\bf G_+}(x)-{\bf G_-}(x),\quad \text{for $x\in \Gamma$}.
\end{equation*}
We then have the following expressions for the curl and divergence of $\G$.
\begin{proposition}\label{prop:fomulas for curl and divergence}
If the field $\G$ satisfies \ref{condition 1}--\ref{condition 3}, then for each $\varphi\in \mathcal D(\Omega)$ we have
\begin{align}\label{eq:formula for divergence}
\langle \nabla \cdot {\bf G},\varphi\rangle
&=\int_{\Gamma} \varphi(x)\, [[{\bf G}(x)]]\cdot {\bf n}\, d\sigma
+\int_{\Omega_-}\varphi\, \nabla \cdot {\bf G_-}\, dx+\int_{\Omega_+}\varphi \nabla \cdot {\bf G_+}\, dx\\ \label{eq:formula for curl}
\langle \nabla \times {\bf G},\varphi\rangle
&=-\int_{\Gamma} \varphi(x)\,  [[{\bf G}(x)]]\times {\bf n}\, d\sigma
+\int_{\Omega_-}\varphi \nabla \times {\bf G_-}\, dx+\int_{\Omega_+}\varphi \nabla \times {\bf G_+}\, dx 
\end{align}
with ${\bf n}$ the unit normal to $\Gamma$ pointing toward $\Omega_+$.
\end{proposition}

\begin{proof}
Given $\varepsilon>0$, define
$$\Omega_-^{\varepsilon}=\{x\in \Omega_-: \dist(x,\Gamma)>\varepsilon\}\qquad\qquad \Omega_+^{\varepsilon}=\{x\in \Omega_+: \dist(x,\Gamma)>\varepsilon\}.$$
For $\varphi\in \mathcal D (\Omega)$, we have from \eqref{form:divergence} and the definition of $\G$ in \eqref{eq:definition of bold G} that 
\begin{align*}
\langle \nabla \cdot{\bf  G}, \varphi\rangle
&=-\int_{\Omega_-} {\bf G_-}\cdot \nabla\varphi\,dx-\int_{\Omega_+}{\bf G_+}\cdot \nabla\varphi\, dx\\
&=-\int_{\Omega_-^{\varepsilon}} {\bf G_-}\cdot \nabla\varphi\,dx
-\int_{ \Omega_-\setminus\Omega_-^{\varepsilon}} {\bf G_-}\cdot \nabla\varphi\,dx-\int_{\Omega_+^{\varepsilon}} {\bf G_+}\cdot \nabla\varphi\,dx
-\int_{ \Omega_+\setminus\Omega_+^{\varepsilon}} {\bf G_+}\cdot \nabla\varphi\,dx\\
&=-(I+II+III+IV).
\end{align*}
$II,IV\to 0$ as $\varepsilon\to 0^+$
because
the extensions $\G_\pm$ are locally bounded in $\Omega_\pm\cup \Gamma$ and $\varphi$ is compactly supported in $\Omega$.
Since $\G_-$ is $C^1$ in $\Omega^{\varepsilon}_-$, using the divergence theorem we obtain
$$
I=\int_{\partial \Omega_-^\varepsilon}\varphi {\bf G_-}\cdot {\bf n_-^{\varepsilon}}\,d\sigma-\int_{\Omega_-^\varepsilon} \varphi{\bf \nabla\cdot G_-}\, dx\\
$$
with ${\bf n}^{\varepsilon}_-$ the outward unit normal to $\partial \Omega^{\varepsilon}_-$.
Since $\varphi$ has compact support in $\Omega$, $\nabla \cdot {\bf G_-}\in L^1(\Omega_-)$, and $\G_-$ is locally bounded in $\Omega_-\cup \Gamma$, it follows that 
$$I\to \int_{\Gamma} \varphi {\bf G_-}\cdot {\bf n}\,d\sigma-\int_{\Omega_-}\varphi\nabla\cdot {\bf G_-}\, dx,$$
with ${\bf n}$ is the unit normal to $\Gamma$ toward $\Omega_+$.
Similarly  we get
$$III\to \int_{\Gamma} \varphi {\bf G_+}\cdot ({-\bf n})\,d\sigma-\int_{\Omega_+}\varphi\nabla\cdot {\bf G_+}\, dx,$$
with $-{\bf n}$ the unit normal to $\Gamma$ toward $\Omega_-$.
Hence \eqref{eq:formula for divergence} follows.

We next prove \eqref{eq:formula for curl}.
Let $\varphi\in \mathcal D(\Omega)$, we have from \eqref{form:curl} and the definition of ${\bf G}$ in \eqref{eq:definition of bold G} that
\begin{align*}
\langle \nabla \times {\bf G}, \varphi\rangle&=\int_{\Omega_-} {\bf G_-} \times \nabla \varphi\, dx+\int_{\Omega_+} {\bf G_+} \times \nabla \varphi\, dx\\
&=\int_{\Omega_-^{\varepsilon}} {\bf G_-}\times \nabla\varphi\,dx
+\int_{ \Omega_-\setminus\Omega_-^{\varepsilon}} {\bf G_-}\times \nabla\varphi\,dx+\int_{\Omega_+^{\varepsilon}} {\bf G_+}\times \nabla\varphi\,dx+\int_{ \Omega_+\setminus\Omega_+^{\varepsilon}} {\bf G_+}\times \nabla\varphi\,dx\\
&=I+II+III+IV
\end{align*}
$II,IV\to 0$ as $\varepsilon\to 0^+$ because the extensions $\G_\pm$ are locally bounded in $\Omega_\pm\cup \Gamma$ and $\varphi$ is compactly supported in $\Omega$. We write  $I=(I_1,I_2,I_3)$, ${\bf G}_-=(G_1^-,G_2^-,G_3^-)$, and ${\bf n^{\varepsilon}_{-}}=(n_1^{\varepsilon},n_2^{\varepsilon},n_3^{\varepsilon})$ the outward unit normal to $\partial \Omega^{\varepsilon}_-$. Using the divergence theorem
\begin{align*}
I_1&=\int_{\Omega_-^{\varepsilon}}G_2^-\varphi_{x_3}-G_3^-\varphi_{x_2}\, dx\\
&= \left(\int_{\partial \Omega_-^{\varepsilon}}\varphi G_2^- n_3^{\varepsilon}\, d\sigma-\int_{\Omega_-^{\varepsilon}}(G_2^-)_{x_3}\varphi\, dx\right)-\left(\int_{\partial \Omega_-^{\varepsilon}}\varphi G_3^- n_2^{\varepsilon}\, d\sigma-\int_{\Omega_-^{\varepsilon}}(G_3^-)_{x_2}\varphi\, dx\right)\\
&=\int_{\partial \Omega^{\varepsilon}_-}\varphi (G_2^- n_3^{\varepsilon}-G_3^-  n_2^{\varepsilon})\, d\sigma+\int_{\Omega^{\varepsilon}_{-}}\varphi((G_3^-)_{x_2}-(G_2^-)_{x_3})\, dx.
\end{align*}
Similarly
\begin{align*}
I_2&=\int_{\Omega_-^{\varepsilon}}-G_1^-\varphi_{x_3}+G_3^-\varphi_{x_1}\, dx
=\int_{\partial \Omega^{\varepsilon}_-}\varphi (G_3^- n_1^{\varepsilon}-G_1^-  n_3^{\varepsilon})\, d\sigma+\int_{\Omega^{\varepsilon}_{-}}\varphi((G_1^-)_{x_3}-(G_3^-)_{x_1})\, dx.
\end{align*}
and
\begin{align*}
I_3=\int_{\Omega_-^{\varepsilon}}G_1^-\varphi_{x_2}-G_2^-\varphi_{x_1}\, dx
=\int_{\partial \Omega^{\varepsilon}_-}\varphi (G_1^- n_2^{\varepsilon}-G_2^-  n_1^{\varepsilon})\, d\sigma+\int_{\Omega^{\varepsilon}_{-}}\varphi((G_2^-)_{x_1}-(G_1^-)_{x_2})\, dx.
\end{align*}
Combining the above calculations, we deduce that
$$I=\int_{\partial \Omega_-^{\varepsilon}}\varphi  {\bf G_{-}}\times {\bf n_-^{\varepsilon}} \, d\sigma+\int_{\Omega^{\varepsilon}_{-}}\varphi \nabla \times {\bf G_{-}}\, dx.$$
Since $\varphi$ is compactly supported in $\Omega$, ${\bf G_-}$ is locally bounded in $\Omega_-\cup \Gamma$ ,  and $\nabla \times {\bf G_-}\in L^1(\Omega_-)$ then 
 $$I\to \int_{\Gamma}  \varphi {\bf G}_{-}\times  {\bf n}\, d\sigma+\int_{\Omega_-}\varphi \nabla \times {\bf G_-}\, dx.$$
Similarly we get 
 $$III=\int_{\partial \Omega_+^{\varepsilon}}\varphi {\bf G_{+}}\times {\bf n}^{\varepsilon}_{+}\, d\sigma+\int_{\Omega^{\varepsilon}_{+}}\varphi \nabla \times {\bf G_{+}}\, dx\to \int_{\Gamma}  \varphi {\bf G}_{+}\times  ({\bf -n})\, d\sigma+\int_{\Omega_+}\varphi \nabla \times {\bf G_+}\, dx.$$
Hence \eqref{eq:formula for curl} follows.
\end{proof}

\subsection{Distributions depending on a parameter}\label{subsec:distributions depending on a parameter t}
Since the fields satisfying Maxwell's equations depend on time, 
we consider vector-valued distributions in $\Omega$ depending on a parameter $t\in \mathbb R$, that is, for each $t\in \mathbb R$, $g(\cdot,t)\in \mathcal D'(\Omega)$, see \cite[Appendix 2, p. 147]{gelfandshilov:distributions}. 
We need the following.
\begin{definition}\label{def:differentiability}Let $g(\cdot,t)$ be a distribution in $\Omega\subseteq\mathbb R^n$ depending on the parameter $t\in \R$.
We say that the derivative of $g(\cdot,t)$ with respect to the parameter $t$ exists if for each test function $\varphi\in \mathcal D(\Omega)$, the function $\langle g(\cdot,t),\varphi\rangle$ is differentiable in $t$, and there exists a distribution $h(\cdot,t)$ depending on the parameter $t$ such that 
$$\langle h(\cdot,t),\varphi\rangle=\dfrac{d}{dt}\langle g(\cdot,t),\varphi\rangle .$$
We write $h(x,t)=\dfrac{\partial g}{\partial t}(x,t).$
\end{definition}

\begin{proposition}\label{prop:switch derivative}
Given a distribution $g(\cdot,t)$ in $\Omega$, $t\in \mathbb R$, if $\dfrac{\partial g}{\partial t}(\cdot,t)$ exists for each $t\in \mathbb R$, then for every multi-index \cristian{$\alpha=(\alpha_1,\cdots ,\alpha_n)$ with $\alpha_i\in \Z_{\geq 0}$}, the derivative with respect to $t$ of the distribution $D^{\alpha} g$ exists and we have 
$$\dfrac{\partial (D^{\alpha} g)}{\partial t}=D^{\alpha}\left(\dfrac{\partial g}{\partial t}\right),$$
\cristian{with $D^\alpha=\dfrac{\partial^{\alpha_1}\partial^{\alpha_2}\cdots\partial^{\alpha_n}}{\partial x_1^{\alpha_1}\partial x_2^{\alpha_2}\cdots\partial x_n^{\alpha_n}}$.}
\end{proposition}

\begin{proof}
If $h(\cdot,t)=\dfrac{\partial g}{\partial t}$ and $\varphi\in \mathcal D(\Omega)$, then $\langle g(\cdot, t),\varphi\rangle$ is differentiable in $t$. 
Since
$$\langle D^{\alpha}g(\cdot,t),\varphi\rangle=(-1)^{|\alpha|}\langle g(\cdot,t),D^{\alpha}\varphi\rangle,$$ 
\cristian{with $|\alpha|=\alpha_1+\cdots +\alpha_n$,} and
$D^{\alpha}\varphi\in \mathcal D(\Omega)$, then $\langle D^\alpha g(\cdot,t),\varphi\rangle$ is differentiable in $t$ and
$$\dfrac{d}{dt}\langle D^{\alpha} g(\cdot,t),\varphi\rangle=(-1)^{|\alpha|}\dfrac{d}{dt}\langle g(\cdot,t),D^{\alpha}\varphi\rangle=(-1)^{|\alpha|} \langle h(\cdot ,t),D^{\alpha}\varphi\rangle=\langle D^{\alpha} h, \varphi\rangle.$$
\end{proof}

Recalling the set up at the beginning of this section in \cristian{Figure \ref{fig:interface_domain}},  $\Omega$ is a smooth open and bounded domain in $\mathbb R^3$, and $\Gamma$ is a smooth surface that splits $\Omega$ into two disjoint open parts $\Omega_+$ and $\Omega_-$, i.e., $\Omega=\Omega_-\cup \Gamma \cup \Omega_+$.
For $t\in\mathbb R$, we are given a function $g(\cdot,t)$ satisfying 
\begin{enumerate}[label=(H\arabic*)]
\item\label{condition 4} $g(\cdot,t)\in L^1_{loc}(\Omega)$ for every $t$,
\item\label{condition 5} for each fixed $x\in \Omega_\pm$ the function $g(x,\cdot)$ is differentiable with respect to $t$ and there exists a function $\psi\in L^1(\Omega_+\cup \Omega_-)$ such that $\left|\dfrac{\partial g}{\partial t}(x,t)\right|\leq \psi(x)$ for a.e. $x\in \Omega_\pm$ and for each $t$.
\end{enumerate}
For every $t$, the linear functional $g(\cdot,t)$ given by
$$\langle g(\cdot,t),\varphi\rangle=\int_{\Omega} g(x,t)\varphi(x)\,dx,$$
is then a well defined distribution by Item \ref{condition 4}.

\begin{proposition}\label{thm:representation formula for dg/dt}
\cristian{If $\G(\cdot,t)=(g_1(\cdot,t),g_2(\cdot ,t),g_3(\cdot ,t))$ is a vector-valued distribution with each $g_i$ satisfying \ref{condition 4} and \ref{condition 5}}, then 
the distribution $\G(\cdot,t)$ has a derivative with respect to the parameter $t$, and
$$\left\langle \dfrac{\partial \G}{\partial t}(\cdot, t),\varphi\right\rangle
=
\int_{\Omega_-} \dfrac{\partial \G}{\partial t}(x,t)\varphi(x)\, dx+\int_{\Omega_+} \dfrac{\partial \G}{\partial t}(x,t)\varphi(x)\, dx.$$
\end{proposition}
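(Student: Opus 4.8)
The plan is to reduce the statement to a standard differentiation-under-the-integral-sign argument, with the dominating function $\psi$ from \ref{condition 5} supplying exactly the hypothesis needed to invoke the dominated convergence theorem. Fix a test function $\varphi\in\mathcal D(\Omega)$. By the definition of the distribution $g(\cdot,t)$, the scalar function $t\mapsto\langle g(\cdot,t),\varphi\rangle$ equals $\int_\Omega g(x,t)\varphi(x)\,dx$, so the task is to show that this is differentiable in $t$ with derivative $\int_\Omega \frac{\partial g}{\partial t}(x,t)\varphi(x)\,dx$, and then to recognize the latter as a distribution in the sense of Definition \ref{def:differentiability}.

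First I would form, for $h\neq 0$, the difference quotient
\begin{equation*}
\frac{\langle g(\cdot,t+h),\varphi\rangle-\langle g(\cdot,t),\varphi\rangle}{h}
=\int_\Omega \frac{g(x,t+h)-g(x,t)}{h}\,\varphi(x)\,dx .
\end{equation*}
By \ref{condition 5}, for a.e. fixed $x$ the integrand converges pointwise to $\frac{\partial g}{\partial t}(x,t)\varphi(x)$ as $h\to0$. To dominate it uniformly in $h$, I apply the mean value theorem in the variable $t$: for a.e. fixed $x$ there is $\xi=\xi(x,h)$ between $t$ and $t+h$ with $\frac{g(x,t+h)-g(x,t)}{h}=\frac{\partial g}{\partial t}(x,\xi)$, whence $\left|\frac{g(x,t+h)-g(x,t)}{h}\,\varphi(x)\right|\le \psi(x)\,|\varphi(x)|$. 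Since $\varphi$ is bounded and compactly supported in $\Omega$ and $\psi\in L^1(\Omega_+\cup\Omega_-)$, the majorant $\psi\,|\varphi|$ lies in $L^1(\Omega)$, so the dominated convergence theorem applies and gives
\begin{equation*}
\frac{d}{dt}\langle g(\cdot,t),\varphi\rangle
=\int_\Omega \frac{\partial g}{\partial t}(x,t)\,\varphi(x)\,dx .
\end{equation*}
Because $\Gamma$ has three-dimensional Lebesgue measure zero, $\Omega_+\cup\Omega_-$ differs from $\Omega$ by a null set, so this integral splits into the sum over $\Omega_-$ and $\Omega_+$ appearing in the statement.

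It then remains to check that this right-hand side defines a distribution depending on the parameter $t$, as Definition \ref{def:differentiability} requires. For fixed $t$, the function $x\mapsto\frac{\partial g}{\partial t}(x,t)$ is defined a.e. on $\Omega_+\cup\Omega_-$ and is dominated by $\psi\in L^1(\Omega_+\cup\Omega_-)\subset L^1_{loc}(\Omega)$; hence it belongs to $L^1_{loc}(\Omega)$ and the functional
\begin{equation*}
\varphi\longmapsto \int_{\Omega_-}\frac{\partial g}{\partial t}(x,t)\,\varphi(x)\,dx+\int_{\Omega_+}\frac{\partial g}{\partial t}(x,t)\,\varphi(x)\,dx
\end{equation*}
is a well-defined element $h(\cdot,t)\in\mathcal D'(\Omega)$ for each $t$. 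Having shown $\frac{d}{dt}\langle g(\cdot,t),\varphi\rangle=\langle h(\cdot,t),\varphi\rangle$ for every $\varphi\in\mathcal D(\Omega)$, Definition \ref{def:differentiability} yields that $\frac{\partial g}{\partial t}$ exists and coincides with $h$, which is precisely the asserted formula.

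The only genuine obstacle is producing an $h$-independent integrable majorant for the difference quotient, and this is exactly where the uniform bound $|\partial_t g|\le\psi$ in \ref{condition 5} enters, via the mean value theorem. Everything else is bookkeeping: the measure-zero status of $\Gamma$ for the splitting of the integral, and the $L^1_{loc}$ membership needed to recognize the limit as a bona fide distribution.
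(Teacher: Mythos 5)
Your proposal is correct and follows essentially the same route as the paper: both split the integral over $\Omega_-$ and $\Omega_+$ (using that $\Gamma$ is a Lebesgue null set), differentiate under the integral sign via the dominated convergence theorem with the majorant $\psi\,|\varphi|$ from \ref{condition 5}, and then verify that the resulting functional is a well-defined distribution as required by Definition \ref{def:differentiability}. The only difference is that you spell out the mean value theorem step that produces the $h$-independent dominating function, which the paper leaves implicit in its appeal to dominated convergence.
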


\begin{proof}
\cristian{Let us denote by $g(\cdot,t)$ any of the components of $\G$.} We write for $\varphi\in \mathcal D(\Omega)$ and $t\in \R$
\begin{align*}
\langle g(\cdot,t),\varphi\rangle
=
\int_{\Omega}g(x,t)\,\varphi(x)\,dx
=
\int_{\Omega_-}g(x,t)\,\varphi(x)\,dx+\int_{\Omega_+}g(x,t)\,\varphi(x)\,dx,
\end{align*}
since from \ref{condition 4}, the integral $\int_{\Gamma}g(x,t)\,\varphi(x)\,dx=0$.
Using condition \ref{condition 5} and the Lebesgue dominated convergence theorem, we can justify differentiation under the integral sign and obtain that
$\langle g(\cdot,t),\varphi\rangle$ is differentiable in $t$, and that
\begin{align*}
\dfrac{d}{dt}\langle g(\cdot,t),\varphi\rangle=
\int_{\Omega_-}\dfrac{\partial g}{\partial t}(x,t)\,\varphi(x)\,dx+\int_{\Omega_+}\dfrac{\partial g}{\partial t}(x,t)\,\varphi(x)\,dx.
\end{align*}
From \ref{condition 5}, the linear functional $h(\cdot,t)$ given by
$$\langle h(\cdot,t),\varphi\rangle=\int_{\Omega_-}\dfrac{\partial g}{\partial t}(x,t)\,\varphi(x)\,dx+\int_{\Omega_+}\dfrac{\partial g}{\partial t}(x,t)\,\varphi(x)\,dx,$$
is a well defined distribution and hence we obtain $h(\cdot,t)=\dfrac{\partial g}{\partial t}(\cdot,t)$.
\end{proof}

\setcounter{equation}{0}
\section{Maxwell equations in distributional sense and general boundary conditions}\label{sec:maxwell and boundary conditions}
We are given $\Omega$ open and bounded domain in $\mathbb R^3$, and $\Gamma$ a smooth surface separating $\Omega$ into two open parts $\Omega_+$ and $\Omega_-$ as in Section \ref{sec:preliminaries}.  We are interested in the Maxwell system \cite[Sections 1.1 and 1.2]{bornandwolf:principlesoptics} which written in Gaussian (or cgs) units has the form
\begin{equation}\label{eq:Maxwell system}
\begin{cases}
\nabla \times {\bf H}=\dfrac{4\pi}{c}{\bf J}+\dfrac{1}{c}\dfrac{\partial {\bf D}}{\partial t}\\
\nabla \cdot {\bf D}=4\pi \rho\\
\nabla \times {\bf E}=-\dfrac{1}{c}\dfrac{\partial {\bf B}}{\partial t}\\
\nabla \cdot {\bf B}=0
\end{cases},
\end{equation} 
where the curl and divergence are understood in the sense of distributions as in Section \ref{sec:preliminaries}, and the fields $\bf H, J, D, E, B$ are vector valued distributions in $\Omega$ depending on the parameter $t\in \R$ in the sense of Section \ref{subsec:distributions depending on a parameter t}, with $\J$ given, and $\rho$ is scalar distribution in $\Omega$, also given, depending also on the parameter $t\in \R$.

The purpose of this section is to show that under general assumptions on the current density field $\J$ and the charge density $\rho$ each equation in the Maxwell system \eqref{eq:Maxwell system}, understood in distributional sense, implies a boundary condition at the interface $\Gamma$ and 
the solutions are classical solutions away from $\Gamma$.
Viceversa, classical solutions in $\Omega_\pm$ discontinuous across $\Gamma$, give rise to distributions solutions in $\Omega$.
This is the contents of the following theorem.
\begin{theorem}\label{thm:general boundary conditions for Maxwell equations}
Let us assume that $\J$ and $\rho$ satisfy 
\begin{enumerate}
\item[(a)] $\J(x,t)=\J_0(x,t)+\nu_t$ with $\J_0(x,t)$ a locally integrable $\C^3$-valued function for $x\in \Omega$ for each $t$;
and $\nu_t$ is a family of $\C^3$-valued Borel measures in $\Omega$ depending on the parameter $t$ that are all concentrated on $\Gamma$, \cristian{that is, the support of $\nu_t$ is contained in $\Gamma$}; 
\item[(b)] 
$\rho(x,t)=\rho_0(x,t)+\mu_t$ with $\rho_0(x,t)$ locally integrable in $\Omega$ for each $t$, and $\mu_t$ are Borel measures in $\Omega$ depending on the parameter $t$ that are all concentrated on the surface $\Gamma$.
\end{enumerate}
Suppose also that $\B$ and $\D$ are given fields satisfying \ref{condition 1},  \ref{condition 2}, \ref{condition 3},  \ref{condition 4},  and \ref{condition 5}; 
and $\E$ and $\H$ are also given fields satisfying  \ref{condition 1},  \ref{condition 2}, and \ref{condition 3};  ${\bf n}$ denotes the unit normal to $\Gamma$ toward $\Omega_+$.

Then we have the following
\begin{enumerate}
\item If $\D$ satisfies $\nabla\cdot \D=4\pi\rho$ in $\Omega$ in the sense of distributions, then $\nabla \cdot \D_\pm(x,t)=4\pi\rho_0(x,t)$ for a.e. $x\in \Omega_\pm$ and for each $t$, and 
\begin{equation}\label{eq:density of mut bis}
d\mu_t(x)=\frac{1}{4\pi}[[\D(x,t)]]\cdot {\bf n}(x)\,d\sigma(x),
\end{equation}
where $d\sigma$ denotes the surface measure on $\Gamma$.
Reciprocally, if $\nabla\cdot \D_{\pm}=4\pi\rho_0$ holds point-wise in $\Omega_{\pm}$ and \eqref{eq:density of mut bis} holds, then $\D=\chi_{\Omega_-}\D_-+\chi_{\Omega_+}\D_+$ satisfies the equation $\nabla\cdot \D=4\pi\rho$ in $\Omega$ in the sense of distributions; as usual, $\chi_E$ denotes the characteristic function of the set $E$.
\item If $\B$ satisfies $\nabla \cdot \B=0$ in $\Omega$ in the sense of distributions, then $\nabla \cdot \B_\pm(x,t)=0$ point-wise $x\in \Omega_\pm$ and for each $t$ and \begin{equation}\label{eq:density of mut bis bis}
[[\B(x,t)]]\cdot {\bf n}(x)=0 \quad \text{for a.e. $x\in \Gamma$ (with respect to surface measure) for all $t$.}
\end{equation}
Reciprocally, if   $\nabla \cdot \B_{\pm}=0$ holds point-wise in $\Omega_{\pm}$ and \eqref{eq:density of mut bis bis} holds, then $\B=\chi_{\Omega_-}\B_-+\chi_{\Omega_+}\B_+$ satisfies the equation $\nabla \cdot \B=0$ in the sense of distributions.

\item If $\H$ and $\D$ satisfy $\nabla \times \H=\dfrac{4\pi}{c}\J+\dfrac{1}{c}\dfrac{\partial \D}{\partial t}$ in $\Omega$ in the sense of distributions, then 
$\nabla \times \H_\pm(x,t)=\dfrac{4\pi}{c}\J_0(x,t)+\dfrac{1}{c}\dfrac{\partial \D_\pm}{\partial t}(x,t)$ point-wise for $x\in \Omega_\pm$ and
\begin{equation}\label{eq:density of mut}
d\nu_t(x)=-\dfrac{c}{4\pi}[[\H(x,t)]]\times {\bf n}(x)\,d\sigma(x).
\end{equation}
Reciprocally, if the equation holds point-wise in $\Omega_\pm$ and 
\eqref{eq:density of mut} also holds, then the distributional equation holds for $\H=\chi_{\Omega_-}\H_-+\chi_{\Omega_+}\H_+$ and $\D=\chi_{\Omega_-}\D_-+\chi_{\Omega_+}\D_+$.

\item If $\E$ and $\B$ satisfy $\nabla \times \E=-\dfrac{1}{c}\dfrac{\partial \B}{\partial t}$ in $\Omega$ in the sense of distributions, then
$\nabla \times \E_\pm(x,t)=-\dfrac{1}{c}\dfrac{\partial \B_\pm}{\partial t}(x,t)$ point-wise in $\Omega_\pm$ and
\begin{equation}\label{eq:pointwise equation E cross n}
[[\E(x,t)]]\times {\bf n}(x)={\bf 0} \quad \text{for a.e. $x\in \Gamma$ (with respect to surface measure) for all $t$.}
\end{equation}
Reciprocally, if the equation holds point-wise in $\Omega_\pm$ and 
\eqref{eq:pointwise equation E cross n} also holds, then the distributional equation holds for $\E=\chi_{\Omega_-}\E_-+\chi_{\Omega_+}\E_+$ and $\B=\chi_{\Omega_-}\B_-+\chi_{\Omega_+}\B_+$.

\end{enumerate}

 \end{theorem}
\begin{proof}
(1) 
From $(b)$, $\rho(\cdot, t)$ is a distribution depending on $t$ given by
\[
\langle \rho(\cdot,t),\varphi\rangle
=
\int_{\Omega}\rho_0(x,t)\,\varphi(x)\,dx+\int_{\Omega}\varphi(x)\,d\mu_t(x)=
\int_{\Omega}\rho_0(x,t)\,\varphi(x)dx+\int_{\Gamma}\varphi(x)\,d\mu_t(x),
\]
for each $\p\in \mathcal D(\Omega)$;
and $\nabla\cdot \D$ is a distribution that acting on a test function $\varphi$ is given by \eqref{eq:formula for divergence}.
We have
\[
\langle \nabla\cdot \D(\cdot, t),\varphi \rangle=4\pi\langle \rho(\cdot ,t),\varphi \rangle\] 
for each $t$.
If $\text{supp}(\varphi )\subset \Omega_-$ or $\text{supp}(\varphi )\subset \Omega_+$,
then from \eqref{eq:formula for divergence}
\[
\int_{\Omega_-}\varphi(x)\, \nabla \cdot {\D_-(x,t)}\, dx+\int_{\Omega_+}\varphi(x)\, \nabla \cdot { \D_+(x,t)}\, dx
=4\pi\int_{\Omega}\rho_0(x,t)\,\varphi(x)\,dx,
\]
which implies
$\nabla \cdot \D_\pm(x,t)=4\pi\rho_0(x,t) \text{ for a.e. $x\in \Omega_\pm$ and for each $t$.}$
That is, the equation $\nabla\cdot \D=4\pi\rho$ is satisfied pointwise a.e. in $\Omega_\pm$.
If $\text{supp}(\varphi )\cap \Gamma\neq\emptyset$, we then get again 
from \eqref{eq:formula for divergence} 
that 
\[
\int_{\Gamma} \varphi(x)\, [[\D(x,t)]]\cdot {\bf n}(x)\, d\sigma(x)
=
4\pi \int_{\Gamma}\varphi(x)\,d\mu_t(x),
\]
that is, the measure $\mu_t$ has density $\dfrac{1}{4\pi}[[\D(x,t)]]\cdot {\bf n}(x)$, and \eqref{eq:density of mut bis}
follows. Notice that this part only uses that $\D$ satisfies \ref{condition 1}--\ref{condition 3}.

For the converse, applying \eqref{eq:formula for divergence} to $\D$ yields
\begin{align*}
\langle \nabla \cdot \D,\varphi\rangle
&=\int_{\Gamma} \varphi(x)\, [[\D(x)]]\cdot {\bf n}\, d\sigma
+\int_{\Omega_-}\varphi\, \nabla \cdot \D_-\, dx
+\int_{\Omega_+}\varphi \nabla \cdot \D_+\, dx\\
&=
4\pi\,
\int_{\Gamma} \varphi(x)\,d\mu_t
+4\pi\,\int_{\Omega_-}\varphi\,\rho_0\, dx
+\int_{\Omega_+}\varphi \,\rho_0\, dx\quad \text{from \eqref{eq:density of mut bis}}\\
&=
4\pi\,
\int_{\Gamma} \varphi(x)\,d\mu_t
+4\pi\,\int_{\Omega}\varphi\,\rho_0\, dx=4\pi\,\langle \rho,\p\rangle.
\end{align*}

(2) 
We proceed as in the proof of (1) and 
in this way we obtain
$\nabla \cdot \B_\pm(x,t)=0$ for a.e. $x\in \Omega_\pm$ and for each $t$;
and
 \eqref{eq:density of mut bis bis}.
 Reciprocally, $\B=\chi_{\Omega_-}\B_-+\chi_{\Omega_+}\B_+$ satisfies the equation $\nabla \cdot \B=0$ in the sense of distributions.

(3) The equation reads \[
\left\langle \nabla \times \H,\varphi \right\rangle 
=\dfrac{4\pi}{c}
\left\langle \J, \varphi \right\rangle
+
\dfrac{1}{c}\left\langle \dfrac{\partial \D}{\partial t}, \varphi \right\rangle,
\]
for each $\p\in \mathcal D(\Omega)$.
From Proposition \ref{thm:representation formula for dg/dt}
\[
\left\langle \dfrac{\partial \D}{\partial t}, \varphi \right\rangle
=
\int_{\Omega_-} \dfrac{\partial \D_-(x,t)}{\partial t}\varphi(x)\, dx+\int_{\Omega_+} \dfrac{\partial \D_+(x,t)}{\partial t}\varphi(x)\, dx.
\]
If $\text{supp}(\varphi )\subset \Omega_-$ or $\text{supp}(\varphi )\subset \Omega_+$,
then from \eqref{eq:formula for curl}
\begin{align*}
&\int_{\Omega_-}\varphi(x)\, \nabla \times \H_-(x,t)\, dx+\int_{\Omega_+}\varphi(x)\, 
\nabla \times \H_+(x,t)\, dx\\
&\qquad =\dfrac{4\pi}{c}\int_{\Omega}\J_0(x,t)\,\varphi(x)\,dx
+
\dfrac{1}{c}\int_{\Omega_-} \dfrac{\partial \D_-}{\partial t}(x,t)\varphi(x)\, dx+\dfrac{1}{c}\int_{\Omega_+} \dfrac{\partial \D_+}{\partial t}(x,t)\varphi(x)\, dx,
\end{align*}
which implies 
$\nabla \times \H_\pm(x,t)=\dfrac{4\pi}{c}\J_0(x,t)+\dfrac{1}{c}\dfrac{\partial \D_\pm}{\partial t}(x,t) \text{ for a.e. $x\in \Omega_\pm$ and all $t$}$.
If $\text{supp}(\varphi )\cap \Gamma\neq\emptyset$, we then get again from \eqref{eq:formula for curl} 
that 
\[
-\int_{\Gamma} \varphi(x)\, [[\H(x,t)]]\times  {\bf n}(x)\, d\sigma(x)
=
\dfrac{4\pi}{c}\int_{\Gamma}\varphi(x)\,d\nu_t(x),
\]
that is, the measure $\nu_t$ has density $-\dfrac{c}{4\pi}[[\H(x,t)]]\times {\bf n}(x)$, so
\eqref{eq:density of mut} follows.

If each equation $\nabla \times \H_\pm=\dfrac{4\pi}{c}\J_0+\dfrac{1}{c}\dfrac{\partial \D_\pm}{\partial t}$ holds in $\Omega_\pm$ in the classical sense and  \eqref{eq:density of mut} holds, then the equation $\nabla \times \H=\dfrac{4\pi}{c}\J+\dfrac{1}{c}\dfrac{\partial \D}{\partial t}$ is satisfied in $\Omega$ in the sense of distributions where $\H$ is the distribution given by the locally integrable function $\chi_{\Omega_-}\H_-+\chi_{\Omega_+}\H_+$.
In fact, from \eqref{eq:formula for curl} and \eqref{eq:density of mut}
\begin{align*}
\langle \nabla \times \H,\p \rangle
&=-\int_{\Gamma} \varphi(x)\,  [[\H(x)]]\times {\bf n}\, d\sigma
+\int_{\Omega_-}\varphi \,\nabla \times \H_-\, dx+\int_{\Omega_+}\varphi \,\nabla \times \H_+\, dx \\
&=
\dfrac{4\pi}{c}\int_{\Gamma} \varphi(x)\,  d\nu_t
+\dfrac{4\pi}{c}\int_{\Omega}\varphi \,\J_0\,dx
+
\dfrac{1}{c}\int_{\Omega_-}\varphi \,
\dfrac{\partial \D_-}{\partial t}\, dx
+
\dfrac{1}{c}\int_{\Omega_+}\varphi \,
\dfrac{\partial \D_+}{\partial t}\, dx\\
&=
\dfrac{4\pi}{c}\langle \J,\p\rangle+\dfrac{1}{c}\left\langle \dfrac{\partial \D}{\partial t},\p\right\rangle. 
\end{align*}

(4)
The equation reads \[
\left\langle \nabla \times \E,\varphi \right\rangle 
=
-\dfrac{1}{c}\left\langle \dfrac{\partial \B}{\partial t}, \varphi \right\rangle,
\]
for each $\p\in \mathcal D(\Omega)$.
From Proposition \ref{thm:representation formula for dg/dt}
\[
\left\langle \dfrac{\partial \B}{\partial t}, \varphi \right\rangle
=
\int_{\Omega_-} \dfrac{\partial \B(x,t)}{\partial t}\varphi(x)\, dx+\int_{\Omega_+} \dfrac{\partial \B(x,t)}{\partial t}\varphi(x)\, dx.
\]
If $\text{supp}(\varphi )\subset \Omega_-$ or $\text{supp}(\varphi )\subset \Omega_+$,
then from \eqref{eq:formula for curl}
\begin{align*}
&\int_{\Omega_-}\varphi(x)\, \nabla \times \E_-(x,t)\, dx+\int_{\Omega_+}\varphi(x)\, 
\nabla \times \E_+(x,t)\, dx\\
&\qquad =-\dfrac{1}{c}\int_{\Omega_-} \dfrac{\partial \B_-}{\partial t}(x,t)\varphi(x)\, dx-\dfrac{1}{c}\int_{\Omega_+} \dfrac{\partial \B_+}{\partial t}(x,t)\varphi(x)\, dx,
\end{align*}
which implies 
$\nabla \times \E_\pm(x,t)=-\dfrac{1}{c}\dfrac{\partial \B_\pm}{\partial t}(x,t)$ for a.e. $x\in \Omega_\pm$ and all $t$.
If $\text{supp}(\varphi )\cap \Gamma\neq\emptyset$, we then get again from \eqref{eq:formula for curl} that
\begin{align*}
\langle \nabla \times \E,\p \rangle
&=
-\int_{\Gamma} \varphi(x)\,  [[\E(x,t)]]\times {\bf n}\, d\sigma
+\int_{\Omega_-}\varphi \nabla \times \E_-\, dx+\int_{\Omega_+}\varphi \nabla \times \E_+\, dx\\
&=
-\int_{\Gamma} \varphi(x)\,  [[\E(x,t)]]\times {\bf n}\, d\sigma
-\dfrac{1}{c}\int_{\Omega_-}\varphi \dfrac{\partial \B_-}{\partial t}(x,t)\, dx
-\dfrac{1}{c}\int_{\Omega_+}\varphi \dfrac{\partial \B_+}{\partial t}(x,t)\, dx\\
&=
-\int_{\Gamma} \varphi(x)\,  [[\E(x,t)]]\times {\bf n}\, d\sigma
-\dfrac{1}{c}\left\langle \dfrac{\partial \B}{\partial t}, \varphi \right\rangle
\end{align*}
implying that 
\[\int_{\Gamma} \varphi(x)\,  [[\E(x,t)]]\times {\bf n}\, d\sigma=0\]
for all test functions $\p\in \mathcal D(\Omega)$ and all $t$, 
therefore \eqref{eq:pointwise equation E cross n} holds.
 
Reciprocally, if \eqref{eq:pointwise equation E cross n} holds we obtain that $\nabla \times \E=-\dfrac{1}{c}\dfrac{\partial \B}{\partial t}$ holds in $\Omega$ in the sense of distributions.

\end{proof}

\subsection{Compatibility condition} 
Let us assume that \eqref{eq:Maxwell system} holds with vector valued distributions $\E,\H,\D,\B,\J$ depending on the parameter $t$, with $\B,\D,\J$ also differentiable with respect to this parameter.
Hence from the first and second Maxwell equations in \eqref{eq:Maxwell system}, \eqref{eq:div of curl zero}, and Proposition \ref{prop:switch derivative} we have in the distributional sense the continuity equation

\cristian{\begin{equation}\label{pde in rho}
0=\nabla \cdot (\nabla \times {\bf H})=\dfrac{4\pi}{c}\nabla \cdot {\bf J}
+\dfrac{1}{c}\nabla \cdot \dfrac{\partial  {\bf D}}{\partial t}=\dfrac{4\pi}{c}\nabla \cdot {\bf J}+\dfrac{1}{c}\dfrac{\partial \rho}{\partial t}.
\end{equation}}

When $\J$ and $\rho$ satisfy the assumptions in Theorem \ref{thm:general boundary conditions for Maxwell equations}, equation \eqref{pde in rho} leads to the following compatibility condition between the current $\J$ and the density $\rho$:
\cristian{\[
\dfrac{4\pi}{c}\(\nabla \cdot \J_0+\nabla \cdot \nu_t\)
+
\dfrac{1}{c}\(\dfrac{\partial \rho_0}{\partial t}+\dfrac{\partial \mu_t}{\partial t}\)=0,
\]}
in the sense of distributions.

\setcounter{equation}{0}
\section{Generalized Snell's law deduced from Maxwell equations}\label{sec:generalized snell law proof}

Letting as usual \cite[Section 1.1.2]{bornandwolf:principlesoptics} the material or constitutive equations
\begin{equation}\label{eq:constitutive equations}
\D=\epsilon\,\E,\qquad \B=\mu\,\H,
\end{equation}
we obtain from \eqref{eq:Maxwell system}
\begin{align}
\nabla \cdot \epsilon \E&=4\,\pi\,\rho,\quad \label{gausslaw}\tag{M.1}\\
\nabla \cdot \mu\H&=0,\quad \label{gausslawmagnetic}\tag{M.2}\\
\nabla \times \E&=-\dfrac{\mu}{c}\dfrac{\partial \H}{\partial t},\quad \label{faradaylaw}\tag{M.3}\\
\nabla \times \H&=\dfrac{4\pi}{c} \,{\mathbf J} + \dfrac{\epsilon}{c} \dfrac{\partial \E}{\partial t}\label{amperemaxwelllaw}\tag{M.4},
\end{align}
where $\rho(x,t)$ is the charge density, $\J(x,t)$ is the current density vector, $\E$ is the electric field, $\H$ is the magnetic field,
and $\epsilon, \mu$ are constants, the permittivity and permeability of the media (isotropic), respectively. 
If $\epsilon_0, \mu_0$ are the permittivity and permeability of vacuum, then $\epsilon_{\text{rel}}=\epsilon/\epsilon_0$ and , $\mu_{\text{rel}}=\mu/\mu_0$ denote the relative permittivity and relative permeability, respectively, and $n=\sqrt{\epsilon_{\text{rel}} \mu_{\text{rel}}}$ is the refractive index of the media. Since the speed of light in vacuum is $c=1/\sqrt{\epsilon_0\mu_0}$, and the phase velocity of light in the media is $v=1/\sqrt{\epsilon \mu}$, then $v=c/n$.

Let $\Gamma$ be the plane $x_3=0$, and let $\Omega_+$, $\Omega_-$ denote the regions above $\Gamma$ and below $\Gamma$ respectively, with $\Omega_-$ filled with medium $I$ and $\Omega_+$ filled with medium $II$.
The constants $\epsilon,\mu$  in the Maxwell system \eqref{gausslaw}--\eqref{amperemaxwelllaw} may be different in media $I$ and $II$, and they are denoted by $\epsilon_-,\mu_-$ in medium $I$, and $\epsilon_+,\mu_+$ in medium $II$.
Suppose the incoming incident electric field in media $I$ is a plane wave with the form
%
\begin{equation}\label{eq:Ei}
\E_i(x,t)={\bf A}_i \,e^{\i\,\omega\(\frac{\k_i\cdot x}{v_1}-t\)}
\end{equation}
where $\k_i$ is the incident unit vector, $v_1$ is the velocity of propagation in medium $I$, ${\bf A}_i$ is a three dimensional constant complex vector, the amplitude, and $\omega$ is a constant (the angular frequency). Here $x=(x_1,x_2,x_3)$. \cristian{As usual, the Roman numeral $\rm i$ in the exponentials denotes the unit imaginary number.} This wave is defined for $x_3<0$, i.e., the field is incident to the plane $\Gamma$ from below and defined in $\Omega_-$.
This wave strikes the plane $\Gamma$ 
and it is then transmitted into medium $II$ as a nonlinear wave and {\it the ansatz is to assume it has the form}
\begin{equation}\label{eq:proposed form of the electric field}
\E_t(x,t)={\bf A}_t \,e^{\i\,\omega\(\frac{\k_t\cdot x}{v_2}+f(x)-t\)}
\end{equation}
where now $\k_t$ is the refracted unit vector, $v_2$ is the velocity of propagation in medium $II$, ${\bf A}_t$ is the amplitude, a constant vector, the wave is defined for $x_3>0$, i.e., on $\Omega_+$, and $f(x)$ is a $C^2$ function defined in a neighborhood of the plane $\Gamma$. 
There is also a wave reflected back into medium $I$ that will be assumed to have also a similar form
\begin{equation}\label{eq:proposed form of the electric field back}
\E_r(x,t)={\bf A}_r \,e^{\i\,\omega\(\frac{\k_r\cdot x}{v_1}+f(x)-t\)},
\end{equation}
with ${\bf A}_r$ a constant vector, $\k_r$ is the reflected back unit vector, $v_1$ is the velocity of propagation in medium $I$,  the wave is defined for $x_3<0$, i.e., on $\Omega_-$.
We are assuming that $f$ depends only on  $x_1,x_2$, i.e., the gradient of $f$ is tangential to the plane $\Gamma$; \cristian{we then denote $\nabla f=(f_{x_1},f_{x_2},0)$}.
In addition, and without loss of generality, we assume that $f(0)=0$, otherwise that simply changes the values of the amplitudes.


\begin{figure}[htbp]
\centering

\begin{tikzpicture}[scale=1.1, line cap=round, line join=round, >=Latex]

\definecolor{medone}{RGB}{90,140,220}
\definecolor{medtwo}{RGB}{240,170,90}
\definecolor{gammacol}{RGB}{180,30,40}

\def\L{2.2}

\fill[medone!25] (-5,0) rectangle (5,3);
\fill[medtwo!25] (-5,-3) rectangle (5,0);

\draw[very thick,gammacol] (-5,0) -- (5,0);

\fill (0,0) circle (1.4pt);
\node[above left] at (0,0) {$P$};

\draw[->,very thick] (0,0) -- (0,\L);
\node[right] at (0,\L) {$\nu=(0,0,1)$};

\draw[->,very thick] ({-0.92*\L},{-0.40*\L}) -- (0,0);
\node[left] at ({-0.55*\L},{-0.18*\L}) {$\k_i,\ (\E_i,\H_i)$};

\draw[->,very thick] (0,0) -- ({0.92*\L},{-0.40*\L});
\node[right] at ({0.62*\L},{-0.18*\L}) {$\k_r,\ (\E_r,\H_r)$};

\draw[->,very thick] (0,0) -- ({0.82*\L},{0.7*\L});
\node[right] at ({0.5*\L},{0.22*\L}) {$\k_t,\ (\E_t,\H_t)$};

\node at (3.7,1.9) {$\Omega_+$};
\node at (3.7,-1.9) {$\Omega_-$};

\node at (-3.6,1.9) {medium $II$};
\node at (-3.6,1.45) {$\epsilon_+,\mu_+$};

\node at (-3.6,-1.9) {medium $I$};
\node at (-3.6,-2.35) {$\epsilon_-,\mu_-$};

\node[gammacol, above right] at (-4.6,0) {$\Gamma=\{x_3=0\}$};

\end{tikzpicture}

\caption{Reflection and transmission at the planar interface $\Gamma=\{x_3=0\}$ separating medium $I$ from medium $II$}
\label{fig:planar-interface}
\end{figure}

The plan of this section is the following:
\begin{enumerate}
\item The fields $\E_i, \E_t, \E_r$ have corresponding magnetic fields $\H_i, \H_t, \H_r$ so that the Maxwell system \eqref{gausslaw}--\eqref{amperemaxwelllaw} is satisfied; these are calculated in Section \ref{subsect:calculation of magnetic fields}. These magnetic fields are used in Section \ref{sec:boundary conditions for magnetic fields} to obtain boundary conditions for them. 
\item Section \ref{subsect:main result and GSL} contains the proof of the main result, Theorem \ref{thm:MAIN THEOREM GSL}. 
The proof uses Theorem \ref{thm:general boundary conditions for Maxwell equations} to obtain the boundary conditions \eqref{eq:equation second components}, \eqref{eq:equation first components}, and  
\eqref{eq:equation third components} for the electric field (equations that will be used later in Section \ref{sec:calculation of the amplitudes of the reflected and transmitted fields BIS}).
As a consequence of these we obtain the generalized Snell law for the first two components of the wave vectors, Equation \eqref{eq:generalized snell law with psi}. 
\cristian{It also contains the statement of Lemma \ref{lm:exponential gradient} used to prove Theorem \ref{thm:MAIN THEOREM GSL} . The proof of this lemma is given in the Appendix \ref{app:proof of exponential lemma}}.
\item In Section \ref{subsec:GLS for general phases} we deduce from the previous items the generalized Snell law for refraction \eqref{eq:generalized snell law} and the generalized law of reflection \eqref{eq:generalized snell law reflection}.
\item 
In Section \ref{subsect:calculation of third components} we show relationships for the third components of the wave vectors, Corollary \ref{cor:third components}.
\item In Section \ref{rmk:orthogonality conditions} we deduce orthogonality conditions for the amplitudes that are used later in Section \ref{sec:calculation of the amplitudes of the reflected and transmitted fields BIS} to calculate the amplitudes of the transmitted and reflected waves.
\end{enumerate}

We use the following notation throughout the paper 
\begin{equation}\label{eq:notation for wave vectors}
\m_i=\omega\frac{\k_i}{v_1},\qquad \m_r=\omega \frac{\k_r}{v_1},\qquad \m_t=\omega \dfrac{\k_t}{v_2},
\end{equation} 
and write $\m_\ell=(m_1^\ell,m_2^\ell, m_3^\ell)$, 
with $\ell=i,r,t$.

\subsection{Calculation of the corresponding magnetic fields}\label{subsect:calculation of magnetic fields}
The values of these magnetic fields are given in the following lemma.

\begin{lemma}\label{eq:corresponding magnetic fields}
Suppose the electric fields $\E_i, \E_t, \E_r$ are given by \eqref{eq:Ei}, \eqref{eq:proposed form of the electric field}, and \eqref{eq:proposed form of the electric field back}, respectively. If the field $\H'(x,t)$ solves 
\[
\nabla \times \(\E_i+\E_r\)=-\dfrac{\mu_-}{c} \dfrac{\partial \H'}{\partial t}
\]
in $\Omega_-$, and the field $\H_t(x,t)$ solves 
\[
\nabla \times \E_t=-\dfrac{\mu_+}{c} \dfrac{\partial \H_t}{\partial t}
\]
in $\Omega_+$, then $\H'=\H_i+\H_r$ with 
\[
\H_i=-\dfrac{c}{\mu_-}\,\E_i\times \dfrac{\k_i}{v_1};\quad
\H_r
=-\dfrac{c}{\mu_-}\,\E_r\times \(\dfrac{\k_r}{v_1}+\nabla f(x)\),
\]
and
\[
\H_t=-\dfrac{c}{\mu_+}\,\E_t\times \(\dfrac{\k_t}{v_2}+\nabla f(x)\),
\]
modulo fields only depending on $x$ which we assume to be zero. We have denoted $\nabla f(x)=\(f_{x_1}(x),f_{x_2}(x),0\)$.

\end{lemma}

\begin{proof}

Calculation of $\H'$. 
We have from \eqref{eq:Ei} and \eqref{eq:proposed form of the electric field back}\[
\nabla \times \(\E_i +\E_r\)
=-\i\,\omega\,\({\bf A}_i\times \dfrac{\k_i}{v_1}\)\,e^{\i\,\omega\(\frac{\k_i\cdot x}{v_1}-t\)}
-\i\,\omega\,{\bf A}_r\times \(\dfrac{\k_r}{v_1}+\nabla f(x)\)\,e^{\i\,\omega\(\frac{\k_r\cdot x}{v_1}+f(x)-t\)}.
\]
Integrating yields
\begin{align*}
\H'&=-\dfrac{c}{\mu_-}\int \nabla \times \(\E_i+\E_r\)\,dt\\
&=
-\dfrac{c}{\mu_-}\,\({\bf A}_i\times \dfrac{\k_i}{v_1}\)\,e^{\i\,\omega\(\frac{\k_i\cdot x}{v_1}-t\)}
-\dfrac{c}{\mu_-}\,{\bf A}_r\times \(\dfrac{\k_r}{v_1}+\nabla f(x)\)\,
e^{\i\,\omega\(\frac{\k_r\cdot x}{v_1}+f(x)-t\)}\\
&=
-\dfrac{c}{\mu_-}\,\E_i\times \dfrac{\k_i}{v_1}
-\dfrac{c}{\mu_-}\,\E_r\times \(\dfrac{\k_r}{v_1}+\nabla f(x)\),
\end{align*}
modulo a field only depending on $x$ which we assume to be zero.

Similarly, the calculation of $\H_t$ follows by integration using \eqref{eq:proposed form of the electric field}.

\end{proof}

\subsection{Main result and the generalized Snell law}\label{subsect:main result and GSL}
\cristian{In this section, we shall prove Theorem \ref{thm:MAIN THEOREM GSL} showing that the phase function $f$ in the scattered waves \eqref{eq:proposed form of the electric field} and \eqref{eq:proposed form of the electric field back} is necessarily affine and we prove relationships between the components of the wave vectors $\k_i,\k_r$ and $\k_t$ and $\nabla f$, that imply the generalized Snell law \eqref{eq:generalized snell law} and \eqref{eq:generalized snell law reflection}. The proof of Theorem \ref{thm:MAIN THEOREM GSL}
requires the following lemma whose proof is given in the Appendix \ref{app:proof of exponential lemma}.
\begin{lemma}\label{lm:exponential gradient}
Suppose that the equation 
\begin{equation}\label{eq:general exponential equation a b c}
a\,e^{\i\,\m_i\cdot X}+b\,e^{\i\,\left(\m_r\cdot X+\psi(X)\right)}
+c\,e^{\i\,\left(\m_t\cdot X+\psi(X)\right)}=0
\end{equation}
holds for all $X=(x_1,x_2,0)$, where $a,b,c$ are fixed complex constants,
$\m_\ell=\(m_1^\ell,m_2^\ell,m_3^\ell\)$ for $\ell=i,r,t$, and $\psi$ is a real-valued $C^2$-function; we use the notation $\psi(X)=\psi(x_1,x_2)$.
We have
\begin{enumerate}
\item[(i)] If $a\neq 0$, $b\neq 0$, and $c\neq 0$, then $\psi$ is an affine function and
\[
m_j^i-m_j^r=\psi_{x_j}, \text{ and } m_j^i-m_j^t=\psi_{x_j}, \quad j=1,2.
\]
\item[(ii)] If $a=0$, $b\neq 0$, and $c\neq 0$, then $m_j^r=m_j^t$ for $j=1,2$.
\item[(iii)] If $a\neq 0$, $b=0$, and $c\neq 0$, then $\psi$ is affine and 
\[
m_j^i-m_j^t=\psi_{x_j}, \quad j=1,2.
\]
\item[(iv)] If $a\neq 0$, $b\neq 0$, and $c= 0$, then $\psi$ is affine and 
\[
m_j^i-m_j^r=\psi_{x_j}, \quad j=1,2.
\]
\end{enumerate}
\end{lemma}
Now, let us proceed to state and the prove the main result of the section.
}

\begin{theorem}\label{thm:MAIN THEOREM GSL}
Recall the definitions of the fields $\E_i,\E_r$ and $\E_t$ from \eqref{eq:Ei}--\eqref{eq:proposed form of the electric field back}, and the corresponding magnetic fields $\H_i,\H_r$ and $\H_t$ from Lemma \ref{eq:corresponding magnetic fields}.
Let 
\begin{equation}\label{def:definition of primed fields E' H'}
\E'(x,t)=\E_i(x,t)+\E_r(x,t),\quad \H'(x,t)=\H_i(x,t)+\H_r(x,t),
\end{equation}
and define $\E(x,t)=\chi_{\Omega_-} \E'(x,t)+\chi_{\Omega_+} \E_t(x,t)$, and $\H(x,t)= \chi_{\Omega_-} \H'(x,t)+\chi_{\Omega_+} \H_t(x,t)$.

Then the fields $\E',\H', \E_t$ and $\H_t$ satisfy conditions \ref{condition 1},  \ref{condition 2}, and \ref{condition 3}.

If $\E$ and $\H$ are distributional solutions to \eqref{faradaylaw},
and $\E$ is a distributional solution to  \eqref{gausslaw} with $\rho$ non-singular, \cristian{i.e. $\rho$ is as in Theorem \ref{thm:general boundary conditions for Maxwell equations}(b) with $\mu_t=0$}, then
the function $f$ must be affine, i.e., a linear function of $x_1,x_2$ plus a constant, and 
\begin{equation}\label{eq:generalized snell law with psi} 
\frac{k_j^i}{v_1}-\frac{k_j^r}{v_1}=f_{x_j} \quad \text{and}\quad  \frac{k_j^i}{v_1}-\frac{k_j^t}{v_2}=f_{x_j} \quad \quad j=1,2,
\end{equation}
with $\k_\ell=\(k_1^\ell,k_2^\ell,k_3^\ell\)$ with $\ell=i,r,t$.

\end{theorem}

\begin{proof}

The proof uses Lemma \ref{lm:exponential gradient} below and Theorem \ref{thm:general boundary conditions for Maxwell equations}.

From the explicit form of the fields, it is clear that $\E',\H', \E_t$ and $\H_t$ satisfy conditions \ref{condition 1},  \ref{condition 2}, and \ref{condition 3}. 
Since we assume that $\E$ and $\H$ are distributional solutions to \eqref{faradaylaw},
it follows that Theorem \ref{thm:general boundary conditions for Maxwell equations}, Part (4), is applicable.

Therefore, the jump of the electric field equals
\begin{align*}
[[\E(X,t)]]
&=
\lim_{x\to X,x\in \Omega_+}\E(x,t)-\lim_{x\to X,x\in\Omega_-}\E(x,t)\\ 
&=
{\bf A}_t \,e^{\i\,\omega\(\frac{\k_t\cdot X}{v_2}+f(X)-t\)}
-
{\bf A}_i \,e^{\i\,\omega\(\frac{\k_i\cdot X}{v_1}-t\)}
-
{\bf A}_r \,e^{\i\,\omega\(\frac{\k_r\cdot X}{v_1}+f(X)-t\)}\,
\end{align*}
with $X=(x_1,x_2,0)$,
and from the boundary condition \eqref{eq:pointwise equation E cross n}
\begin{equation}\label{eq:boundary condition with phi}
\({\bf A}_t\times \n\) \,e^{\i\,\omega\(\frac{\k_t\cdot X}{v_2}+f(X)-t\)}
-
\({\bf A}_i\times \n\) \,e^{\i\,\omega\(\frac{\k_i\cdot X}{v_1}-t\)}
-
\({\bf A}_r\times \n\) \,e^{\i\,\omega\(\frac{\k_r\cdot X}{v_1}+f(X)-t\)}={\bf 0},
\end{equation}
for all $X\in \Gamma$.

If we write the components of $A_\ell=\(A_1^\ell,A_2^\ell,A_3^\ell\)$ with $\ell=i,r,t$, then
$
A_\ell\times \n=\(A_2^\ell,-A_1^\ell,0\)$.
Also, if we set $\psi(X)=\omega f(X)$ and recall \eqref{eq:notation for wave vectors}, then 
\eqref{eq:boundary condition with phi} is the system of two scalar equations
\begin{equation}\label{eq:equation second components}
-A_2^i \,e^{\i\,\(\m_i\cdot X\)}
-A_2^r \,e^{\i\,\(\m_r\cdot X+\psi(X)\)}
+
A_2^t \,e^{\i\,\(\m_t\cdot X+\psi(X)\)}=0,
\end{equation}
\begin{equation}\label{eq:equation first components}
A_1^i \,e^{\i\,\(\m_i\cdot X\)}
+A_1^r \,e^{\i\,\(\m_r\cdot X+\psi(X)\)}
-
A_1^t \,e^{\i\,\(\m_t\cdot X+\psi(X)\)}=0.
\end{equation}
To prove the desired result we shall use Lemma \ref{lm:exponential gradient}. Let us first assume that
\begin{equation}\label{eq:Ai not parallel to the normal}
{\bf A}_i\times \n\neq 0,\quad {\bf A}_r\times \n\neq 0, \quad \text{and } {\bf A}_t\times \n\neq 0.
\end{equation}

From \eqref{eq:Ai not parallel to the normal} we have that $A_2^\ell \neq 0$ or $A_1^\ell\neq 0$ for $\ell=i, r,t$. 
Notice that in \eqref{eq:equation second components} if one coefficient is different from zero then at least one of the other two must be different from zero; and likewise in \eqref{eq:equation first components}. If in \eqref{eq:equation second components} $A_2^\ell\neq 0$ for $\ell=i,r,t$, then applying Lemma \ref{lm:exponential gradient} (i) it follows that $\psi$ is affine and \eqref{eq:generalized snell law with psi} holds.
Likewise, if in \eqref{eq:equation first components} $A_1^\ell\neq 0$ for $\ell=i,r,t$, then applying Lemma \ref{lm:exponential gradient} (i) it follows that $\psi$ is affine and \eqref{eq:generalized snell law with psi} holds.
If in \eqref{eq:equation second components} $A_2^i\neq 0$, then $A_2^r\neq 0$ or $A_2^t\neq 0$. If $A_2^r\neq 0$ and $A_2^t=0$, by Lemma \ref{lm:exponential gradient} (iv) we have $\psi$ is affine and $m_j^i-m_j^r=\psi_{x_j}$, $j=1,2$.
Since ${\bf A}_t\times \n\neq 0$, if $A_2^t=0$ we then must have $A_1^t\neq 0$ and from \eqref{eq:equation first components} $A_1^i\neq 0$ or $A_1^r\neq 0$. If $A_1^i\neq 0$ and $A_1^r= 0$, then by Lemma \ref{lm:exponential gradient} (iii) $m_j^i-m_j^t=\psi_{x_j}$ for $j=1,2$; so \eqref{eq:generalized snell law with psi} follows. 
On the other hand, if $A_1^i= 0$ and $A_1^r\neq 0$, then by Lemma \ref{lm:exponential gradient} (ii) $m_j^r=m_j^t$ for $j=1,2$ and so also \eqref{eq:generalized snell law with psi} follows.
In general, all the possibilities for the values of the coefficients with their conclusions are summarized in the following table:

\begin{center}
        \begin{tabular}{ | c | c | c | c | c | c | c |}
        \hline
            $A_2^i$ & $A_2^r$ & $A_2^t$ & $A_1^i$ & $A_1^r$ & $A_1^t$  & Conclusion \\
            \hline
            $\neq 0$ & $\neq 0$ & 0 & 0 & $\neq 0$ & $\neq 0$ & $\m_i-\m_r=\nabla \psi,$ and $\m_r=\m_t$ by Lemma \ref{lm:exponential gradient} (iv) (ii)\\
            0 & $\neq 0$ & $\neq 0$ & $\neq 0$ & 0 & $\neq 0$ & $\m_r=\m_t$ and $\m_i-\m_t=\nabla \psi,$ by Lemma \ref{lm:exponential gradient} (ii) (iii)\\
            $\neq 0$ & 0 & $\neq 0$ & 0 & $\neq 0$ & $\neq 0$ &  $\m_i-\m_t=\nabla \psi,$ and $\m_r=\m_t$ by Lemma \ref{lm:exponential gradient} (iii) (ii)\\ 
            $\neq 0$ & 0 & $\neq 0$ & $\neq 0$ & $\neq 0$ & 0 &  $\m_i-\m_t=\nabla \psi,$ and $\m_i-\m_r=\nabla \psi$ by Lemma \ref{lm:exponential gradient} (iii) (iv)\\
            0 & $\neq 0$ & $\neq 0$ & $\neq 0$ & $\neq 0$ & 0 &  $\m_r=\m_t,$ and $\m_i-\m_r=\nabla \psi$ by Lemma \ref{lm:exponential gradient} (ii) (iv)\\  
            0 & $\neq 0$ & $\neq 0$ & $\neq 0$ & 0 & $\neq 0$ &  $\m_r=\m_t,$ and $\m_i-\m_t=\nabla \psi$ by Lemma \ref{lm:exponential gradient} (ii) (iii)\\        \hline
        \end{tabular}
    \end{center}

Therefore $\psi$ is affine and \eqref{eq:generalized snell law with psi} follows when \eqref{eq:Ai not parallel to the normal} holds since $\psi=\omega f$.

It remains to prove \eqref{eq:generalized snell law with psi} when \eqref{eq:Ai not parallel to the normal} does not hold. 
That is, suppose 
\begin{equation}\label{eq:at least one Ai is parallel to the normal}
{\bf A}_i\times \n= 0,\text{ or } {\bf A}_r\times \n= 0, \text{ or }  {\bf A}_t\times \n= 0.
\end{equation}
Here we use the constitutive equations \eqref{eq:constitutive equations} and Part (1) of Theorem \ref{thm:general boundary conditions for Maxwell equations}. 
Notice that the permittivity constant for $\Omega_-$ is $\epsilon_-$ and for 
$\Omega_+$ is $\epsilon_+$. 
We recall the assumption that the field ${\bf D}=\epsilon_-\,\chi_{\Omega_-}\({\bf E}_i+{\bf E}_r\)+\epsilon_+\,\chi_{\Omega_+}{\bf E}_t=
\epsilon_-\,\chi_{\Omega_-}\E'+\epsilon_+\,\chi_{\Omega_+}{\bf E}_t$ is a distributional solution to the second equation in \eqref{eq:Maxwell system} with $\rho$ having singular part equals zero. Then Part (1) of Theorem \ref{thm:general boundary conditions for Maxwell equations} is applicable with $\mu_t=0$ and we have 
\begin{equation}\label{eq:boundary condition for D normal n}
[[{\bf D}(X,t)]]\cdot {\bf n}=0,
\end{equation}
where 
\[
[[{\bf D}(X,t)]]
=\lim_{x\to X,x\in \Omega_+}\epsilon_+\E_t(x,t)-\lim_{x\to X,x\in\Omega_-}\epsilon_-\E'(x,t). 
\]
Since ${\bf n}=(0,0,1)$, we then have from the form of the fields $\E_t$ and $\E'$ that the equation \eqref{eq:boundary condition for D normal n} reads ($\psi=\omega f$)
\begin{equation}\label{eq:equation third components}
\epsilon_-\,A_3^ie^{\i\,\m_i\cdot X}+\epsilon_-\,A_3^re^{\i\,\left(\m_r\cdot X+\psi(X)\right)}-\epsilon_+\,A_3^te^{\i\,\left(\m_t\cdot X+\psi(X)\right)}=0,
\end{equation}
which will be used to deal with the case \eqref{eq:at least one Ai is parallel to the normal}.
To begin with let us assume ${\bf A}_i\times \n=0$, that is, $A_1^i=0$ and $A_2^i=0$. Since ${\bf A}_i\neq 0$, it follows that $A_3^i \neq 0$.
Hence from \eqref{eq:equation third components} it follows that $A_3^r\neq 0$ or $A_3^t\neq 0$.
If $A_3^r\neq 0$ and $A_3^t\neq 0$, then by Lemma \ref{lm:exponential gradient} (i) we get that $\psi$ is affine and \eqref{eq:generalized snell law with psi} holds.
If $A_3^i\neq 0$, $A_3^r\neq 0$ and $A_3^t=0$, by Lemma \ref{lm:exponential gradient} (iv) it follows that $m_j^i-m_j^r=\psi_{x_j}$ for $j=1,2$.
But if $A_3^t=0$, since the amplitude ${\bf A}_t\neq 0$, we must have $A_1^t\neq 0$ or $A_2^t\neq 0$. 
If $A_1^t\neq 0$ and $A_2^t= 0$, then using \eqref{eq:equation first components}
we must have $A_1^r\neq 0$, and by Lemma \ref{lm:exponential gradient} (ii) $m_j^r=m_j^t$ for $j=1,2$ and so 
\eqref{eq:generalized snell law with psi} follows.
If $A_1^t= 0$ and $A_2^t\neq 0$, then from \eqref{eq:equation second components} $A_2^r\neq 0$ so by  Lemma \ref{lm:exponential gradient} (ii) $m_j^r=m_j^t$ for $j=1,2$ and so 
\eqref{eq:generalized snell law with psi} follows.

If $A_3^i\neq 0$, $A_3^r= 0$ and $A_3^t\neq 0$, by Lemma \ref{lm:exponential gradient} (iii) it follows that $m_j^i-m_j^t=\psi_{x_j}$ for $j=1,2$.
But if $A_3^r=0$, since the amplitude ${\bf A}_r\neq 0$, we must have $A_1^r\neq 0$ or $A_2^r\neq 0$. 
If $A_1^r\neq 0$ and $A_2^r= 0$, then using \eqref{eq:equation first components}
we must have $A_1^t\neq 0$, and by Lemma \ref{lm:exponential gradient} (ii) $m_j^r=m_j^t$ for $j=1,2$ and so 
\eqref{eq:generalized snell law with psi} follows.
If $A_1^r= 0$ and $A_2^r\neq 0$, then from \eqref{eq:equation second components} $A_2^t\neq 0$ so by  Lemma \ref{lm:exponential gradient} (ii) $m_j^r=m_j^t$ for $j=1,2$ and so 
again \eqref{eq:generalized snell law with psi} follows.

The remaining cases in \eqref{eq:at least one Ai is parallel to the normal} are treated similarly.
\end{proof}

\subsection{Deduction of the generalized Snell law for a general phase discontinuity}\label{subsec:GLS for general phases}
Let us now consider a general phase discontinuity function $\phi$ defined on the plane $x_3=0$ and let $\E_i$ be the incident electric field given by \eqref{eq:Ei}. For a point $P$ on the plane $x_3=0$, we model the scattering of the wave taking into account the value of the gradient of $\phi$, in other words, the function $f$ in \eqref{eq:proposed form of the electric field} and \eqref{eq:proposed form of the electric field back} is chosen to be 
$f(x)=\nabla \phi(P)\cdot x$\cristian{; $x=(x_1,x_2)$.} 
We will prove in Section \ref{sec:calculation of the amplitudes of the reflected and transmitted fields BIS} that the amplitudes of the scattered waves \eqref{eq:proposed form of the electric field} and \eqref{eq:proposed form of the electric field back} depend on the choice of the function $f$ and therefore in this case will depend of the point $P$.
Applying Theorem \ref{subsect:main result and GSL} with this choice of $f$ we then obtain from \eqref{eq:generalized snell law with psi} the generalized Snell's law for refraction \eqref{eq:generalized snell law} and the generalized law of reflection \eqref{eq:generalized snell law reflection}.
\cristian{In fact, since $n_i=c/v_i$, from \eqref{eq:generalized snell law with psi} we have $n_1 k_j^i-n_2k_j^t=cf_{x_j}$ for $j=1,2$ and since $\n=(0,0,1)$ taking $\lambda=n_1k_3^i-n_2k_3^t$, \eqref{eq:generalized snell law} follows with $f$ replaced by $cf$. Similarly, \eqref{eq:generalized snell law reflection} follows with $\lambda'=n_1k_3^i-n_1k_3^r$.} 

We can now extend this argument as follows. Suppose we take a finite number of points $P_1,\cdots ,P_N$ on the plane $x_3=0$, and for each point $P_j$ we choose the phase function $\nabla \phi(P_j)\cdot x$. Each of these phases gives rise to a transmitted and a reflected back waves. Then by superposition, the scattered waves for all the points $P_j$ will have the form
\[
\sum_{j=1}^N {\bf A}_t(P_j) \,e^{\i\,\omega\(\frac{\k_t\cdot x}{v_2}+\nabla \phi(P_j)\cdot x-t\)},
\]  
for the transmitted wave and
\[
\sum_{j=1}^N {\bf A}_r(P_j) \,e^{\i\,\omega\(\frac{\k_r\cdot x}{v_1}+\nabla \phi(P_j)\cdot x-t\)}
\]
for the wave reflected back.

\subsection{Calculation of the third components of the wave vectors}\label{subsect:calculation of third components}
Recalling the notation \eqref{eq:notation for wave vectors}, and from 
\eqref{eq:generalized snell law with psi} we have
\[
m_j^i-m_j^r=\omega f_{x_j},\quad m_j^i-m_j^t=\omega f_{x_j}\qquad j=1,2.
\]
The following corollary, shows formulas for the third component $m_3^{\ell}$ for $\ell=r,t$ as a function of $m_1^i$, $m_2^i$ and $f$.  

\begin{corollary}\label{cor:third components}
Under the assumptions of Theorem \ref{thm:MAIN THEOREM GSL}, and if 
\begin{equation}\label{eq:compatibility conditions for mi and f}
\sqrt{\left(m_1^i-\omega f_{x_1}\right)^2+\left(m_2^i-\omega f_{x_2}\right)^2}\leq \min\left\{\frac{\omega}{v_1},\frac{\omega}{v_2} \right\},
\end{equation}
then
\begin{align}
m_3^r=-\sqrt{\left(\frac{\omega}{v_1}\right)^2-\left(m_1^i-\omega f_{x_1}\right)^2-\left(m_2^i-\omega f_{x_2}\right)^2}\label{eq:m3r}\\
m_3^t=\sqrt{\left(\frac{\omega}{v_2}\right)^2-\left(m_1^i-\omega f_{x_1}\right)^2-\left(m_2^i-\omega f_{x_2}\right)^2}.\label{eq:m3t}
\end{align}
\end{corollary}

\begin{proof}
%

From \eqref{eq:generalized snell law with psi}
\begin{equation}\label{eq:relations between mi, mr}
\m_i-\m_r=\lambda \,\n +\omega \(f_{x_1},f_{x_2},0\),
\end{equation}
where ${\bf n}$ is the vertical unit direction, and $\lambda=m_3^i-m_3^r$. 
Dotting \eqref{eq:relations between mi, mr} with $\m_i$ and with $\m_r$ yields
\begin{align*}
\m_i\cdot \m_i-\m_r\cdot \m_i&=\lambda  m_3^i +\omega \(f_{x_1},f_{x_2},0\)\cdot \m_i\\
\m_i\cdot \m_r-\m_r\cdot \m_r&=\lambda m_3^r+\omega \(f_{x_1},f_{x_2},0\)\cdot \m_r,
\end{align*}
and adding these equations, we obtain that
\[
|\m_i|^2-|\m_r|^2=\lambda \, (m_3^i+m_3^r)+\omega \((m_1^i+m_1^r)f_{x_1}+(m_2^i+m_2^r)f_{x_2}\).
\]
Since $|\m_i|^2=|\m_r|^2=\omega^2/v_1^2$, it follows that 
\begin{equation}\label{eq:equation for lambda reflected back}
\lambda\,\(m_3^i+m_3^r\)+\omega \((m_1^i+m_1^r)f_{x_1}+(m_2^i+m_2^r)f_{x_2}\)=0.
\end{equation}
Hence, from \eqref{eq:relations between mi, mr}
\[
\(m_3^i\)^2-\(m_3^r\)^2+\omega \((m_1^i+m_1^r)f_{x_1}+(m_2^i+m_2^r)f_{x_2}\)=0.
\]
Since $\k_r$ is the unit direction of the ray reflected back in medium $I$, we have $m_3^r=\dfrac{\omega}{v_1}k_3^r<0$ and so solving for $m_3^r$ gives
\[
m_3^r=-\sqrt{\(m_3^i\)^2+\omega \((m_1^i+m_1^r)f_{x_1}+(m_2^i+m_2^r)f_{x_2}\)}.
\]
Since $\k_{i}$ is a unit vector with $k_3^i>0$, it follows that $m_3^i=\sqrt{\(\omega/v_1\)^2-(m_1^i)^2-(m_2^i)^2}$
%
and from the fact that $m_j^r=m_j^i-\omega f_{x_j}$ for $j=1,2$ we get
\begin{align*}
m_3^r
&=
- \sqrt{\left(\frac{\omega}{v_1}\right)^2-(m_1^i)^2-(m_2^i)^2+\omega \((2m_1^i-\omega f_{x_1})f_{x_1}+((2m_2^i-\omega f_{x_2})f_{x_2}\)}\\
&=
- \sqrt{\left(\frac{\omega}{v_1}\right)^2-\(m_1^i-\omega f_{x_1}\)^2-\(m_2^i-\omega f_{x_2}\)^2},
\end{align*}
which proves \eqref{eq:m3r}.

To prove \eqref{eq:m3t},
we proceed similarly using \eqref{eq:generalized snell law with psi} and that  $\m_t=\dfrac{\omega}{v_2}\k_t$, $|\k_t|=1$,  with  $m_3^t>0$.
\end{proof}

\begin{remark}\rm 
In the standard case, i.e., when $f=0$, 
\eqref{eq:compatibility conditions for mi and f} obviously reads $\sqrt{(m_1^i)^2+(m_2^i)^2}\leq \min\left\{\omega/v_1,\omega/v_2 \right\}$.
In particular,
when the refractive index of medium $I$ is smaller than the one for medium $II$, that is, when $v_1>v_2$, this always happens since $|\m_i|=\omega/v_1$.
On the other hand, if $v_1<v_2$, given an incident wave satisfying \eqref{eq:compatibility conditions for mi and f} to avoid total internal reflection, the third component must satisfy
\[
\k_i\cdot {\bf n}=\dfrac{v_1}{\omega}m_3^i=\dfrac{v_1}{\omega}\sqrt{\left(\dfrac{\omega}{v_1}\right)^2 - (m_1^i)^2 - (m_2^i)^2}\geq \dfrac{v_1}{\omega}\sqrt{\left(\dfrac{\omega}{v_1}\right)^2 - \left(\dfrac{\omega}{v_2}\right)^2}=\sqrt{1 - \left(\dfrac{v_1}{v_2}\right)^2}.
\]
This condition agrees with the one obtained for the standard Snell's law, see \cite[Section 2]{2015-gutierrez-sabra:asphericallensdesignandimaging}.
On the other hand, when $f \neq 0$, the compatibility conditions for $\m_i$ and $f$ in \eqref{eq:compatibility conditions for mi and f} must be satisfied in order to have reflected and transmitted waves.

\end{remark}
We also remark that from \eqref{eq:m3r} and \eqref{eq:m3t} the following relation between the third components of $\m_r$ and $\m_t$ holds:
\begin{equation}\label{eq:formula for m3t in terms of m3r}
m_3^t=\sqrt{\omega^2/v_2^2-\omega^2/v_1^2+\(m_3^r\)^2}.
\end{equation}

\subsection{Orthogonality conditions for the amplitudes}\label{rmk:orthogonality conditions} 
The following conditions must be satisfied by the amplitudes, which will be utilized in Section \ref{sec:calculation of the amplitudes of the reflected and transmitted fields BIS}.

\begin{lemma}\label{lm:orthogonality conditions for amplitudes}
Recall from \eqref{def:definition of primed fields E' H'} the definitions of $\E'$ and $\H'$ and 
the fields $\E(x,t)=\chi_{\Omega_-} \E'(x,t)+\chi_{\Omega_+} \E_t(x,t)$, and $\H(x,t)= \chi_{\Omega_-} \H'(x,t)+\chi_{\Omega_+} \H_t(x,t)$.

If $\E$ and $\H$ are distributional solutions to \eqref{faradaylaw},
and $\E$ is a distributional solution to  \eqref{gausslaw} with $\rho$ non-singular, then the amplitudes satisfy the following orthogonality conditions
\begin{equation}\label{eq:orthogonality for Ai}
A_1^i m_1^i+A_2^i m_2^i+A_3^i m_3^i=0,
\end{equation}
\begin{equation}\label{eq:orthogonality for Ar}
A_1^r m_1^i+A_2^r m_2^i+A_3^r m_3^r=0,
\end{equation}
and 
\begin{equation}\label{eq:orthogonality for At}
A_1^t m_1^i+A_2^t m_2^i+A_3^t m_3^t=0.
\end{equation}

\end{lemma}
\begin{proof}

From Theorem \ref{thm:MAIN THEOREM GSL}, \eqref{eq:generalized snell law with psi} holds and from the form of the fields \eqref{eq:Ei},
\eqref{eq:proposed form of the electric field}, \eqref{eq:proposed form of the electric field back}, and the notation \eqref{eq:notation for wave vectors} it follows that
\begin{align*}
\E_i(x,t)&=
{\bf A}_i e^{\i \(m_1^ix_1+m_2^i x_2+ m_3^ix_3-\omega\, t\)},\quad
\E_r(x,t)
=
{\bf A}_r e^{\i\(m_1^ix_1+m_2^i x_2+ m_3^rx_3-\omega\, t\)}\\
\E_t(x,t)
&={\bf A}_t e^{\i \(m_1^ix_1+m_2^i x_2+ m_3^tx_3-\omega\, t\)}.
\end{align*}

Since $\E$ is a distributional solution to \eqref{gausslaw}, it follows that $\E'$ satisfies \eqref{gausslaw} pointwise in $\Omega_-$ and $\E_t$ satisfies \eqref{gausslaw} pointwise in $\Omega_+$.
We then have for $x_3<0$ that
\begin{align*}
0&=\diver \, \E'=\diver \, \E_i+\diver \, \E_r\\
&=\i\,\(m_1^i,m_2^i,m_3^i\)\cdot {\bf A}_i \,e^{\i \(m_1^ix_1+m_2^i x_2+ m_3^ix_3-\omega\, t\)}
+
\i\,\(m_1^i,m_2^i,m_3^r\)\cdot {\bf A}_r \,e^{\i \(m_1^ix_1+m_2^i x_2+ m_3^rx_3-\omega\, t\)}\\
&=
\i\,e^{\i \(m_1^ix_1+m_2^i x_2-\omega\, t\)}
\(
\(m_1^i,m_2^i,m_3^i\)\cdot {\bf A}_i\,e^{\i m_3^ix_3}
+
\(m_1^i,m_2^i,m_3^r\)\cdot {\bf A}_r\,e^{\i m_3^rx_3}\)
\end{align*}
which implies
\[
\(m_1^i,m_2^i,m_3^i\)\cdot {\bf A}_i\,e^{\i \,m_3^ix_3}
+
\(m_1^i,m_2^i,m_3^r\)\cdot {\bf A}_r\,e^{\i \,m_3^rx_3}=0
\]
for all $x_3<0$.
Since $m_3^i>0$ and $m_3^r<0$, the exponentials in the last identity are linearly independent and 
therefore the coefficients must be zero, that is, \eqref{eq:orthogonality for Ai} and \eqref{eq:orthogonality for Ar} follow.

Since $\diver\,\E_t=0$ for $x_3>0$, \eqref{eq:orthogonality for At} also follows.

\end{proof}

\setcounter{equation}{0}
\section{Boundary conditions for the magnetic fields}\label{sec:boundary conditions for magnetic fields}

In this section, we derive the boundary conditions for the magnetic fields presented in Lemma \ref{eq:corresponding magnetic fields} from Theorem \ref{thm:general boundary conditions for Maxwell equations}. These boundary conditions will be utilized in Section \ref{sec:calculation of the amplitudes of the reflected and transmitted fields BIS} 
\cristian{in the calculation of the amplitudes.} 
 


\begin{proposition}\label{prop:boundary condition from M4} 
Under the assumptions of Theorem \ref{thm:MAIN THEOREM GSL}, if $\E$ and $\H$ are solutions to \eqref{amperemaxwelllaw}, with current density ${\bf J}$ satisfying the assumptions in Theorem \ref{thm:general boundary conditions for Maxwell equations} with $\nu_t=0$,  then 
\begin{align}\label{eq:relations with mupmA}
-(1/\mu_+)\(A_3^t m_1^i-m_3^t A_1^t\)
+(1/\mu_-)\(A_3^i m_1^i-m_3^i A_1^i +A_3^r m_1^i-m_3^rA_1^r\)
&=0\\
-(1/\mu_+)\(A_3^t m_2^i-m_3^t A_2^t \)
+(1/\mu_-) \(A_3^i m_2^i-m_3^i A_2^i
+A_3^r m_2^i-m_3^r A_2^r\)&=0\label{eq:relations with mupmB}.
\end{align}
\end{proposition}

\begin{proof}
From \eqref{eq:constitutive equations} we recall that 
$\D=\epsilon_-\,\E$ in $\Omega_-$, $\D=\epsilon_+\,\E$ in $\Omega_+$,
and  $\B=\mu_-\,\H$ in $\Omega_-$, $\B=\mu_+\,\H$ in $\Omega_+$.
Since $\J$ has not a singular part, then Theorem \ref{thm:general boundary conditions for Maxwell equations} Part (3) is applicable and we get that 
\begin{equation}\label{eq:boundary on H}
[[\H(X,t)]]\times \n(X)=0 
\end{equation}
for $X$ in the interface plane $\Gamma=\{x_3=0\}$.

From the expression of the electric fields in  \eqref{eq:Ei},  \eqref{eq:proposed form of the electric field}, \eqref{eq:proposed form of the electric field back}, and the corresponding magnetic fields obtained in Lemma \ref{eq:corresponding magnetic fields}, we have that for every $X=(x_1,x_2,0)\in \Gamma$
\begin{align*}
\lim_{x\to X,x\in \Omega_-}\H'(x,t)
&=
-\dfrac{c}{\mu_-}\({\bf A}_i\times \frac{\k_i}{v_1}\,\,e^{\i\,\omega\(\frac{\k_i\cdot X}{v_1}-t\)}
+{\bf A}_r\times \(\frac{\k_r}{v_1}+\nabla f(X)\)\,
e^{\i\,\omega\(\frac{\k_r\cdot X}{v_1}+f(X)-t\)}\)\\
\lim_{x\to X,x\in \Omega_+}\H_t(x,t)
&=
-\dfrac{c}{\mu_+}\,{\bf A}_t\times \(\dfrac{\k_t}{v_2}+\nabla f(X)\) \,
e^{\i\omega\(\frac{\k_t\cdot X}{v_2}+f(X)-t\)},
\end{align*}
where $\nabla f(X)=\(f_{x_1},f_{x_2},0\)$.
Substituting these into \eqref{eq:boundary on H}, we get
\begin{align*}\label{eq:boundary condition for the magnetic field H}
0&=-\dfrac{c}{\mu_+}\,\({\bf A}_t\times \(\dfrac{\k_t}{v_2}+\nabla f(X)\)\)\times \n \,
e^{\i\,\omega\(\frac{\k_t\cdot X}{v_2}+f(X)-t\)}\notag\\
&\qquad  +\dfrac{c}{\mu_-}\,\(\({\bf A}_i\times \frac{\k_i}{v_1}\)\times \n\,\,e^{\i\,\omega\(\frac{\k_i\cdot X}{v_1}-t\)}
+\({\bf A}_r\times \(\frac{\k_r}{v_1}+\nabla f(X)\)\)\times \n\,
e^{\i\,\omega\(\frac{\k_r\cdot X}{v_1}+f(X)-t\)}\).
\end{align*} 
Then from \eqref{eq:generalized snell law with psi}
 \begin{align*}
&-\dfrac{c}{\mu_+}\,\({\bf A}_t\times \(\dfrac{k_i^1}{v_1},\dfrac{k_i^2}{v_1},\dfrac{k_t^3}{v_2}\)\)\times \n \,
e^{\i\,\omega\(\frac{\k_r\cdot X}{v_1}+f(X)\)}\\
&\quad +\dfrac{c}{\mu_-}\,\(\({\bf A}_i\times \(\dfrac{k_i^1}{v_1},\dfrac{k_i^2}{v_1},\dfrac{k_i^3}{v_1}\)\)\times \n\,\,
e^{\i\,\omega\(\frac{\k_r\cdot X}{v_1}+\nabla f(X)\cdot X\)}
+\({\bf A}_r\times \(\dfrac{k_i^1}{v_1},\dfrac{k_i^2}{v_1},\dfrac{k_r^3}{v_2}\)\)\times \n\,
e^{\i\,\omega\(\frac{\k_r\cdot X}{v_1}+f(X)\)}\)
=0.
\end {align*}
From Theorem \ref{thm:MAIN THEOREM GSL}, $f(X)$ is affine, and since $f(0)=0$ we have $f(X)=\nabla f(X)\cdot X$. 
So canceling the exponential and the constant $c$ in the last equation we obtain
\begin{equation}\label{eq:another simplification}
-\dfrac{1}{\mu_+}\,\({\bf A}_t\times \(\dfrac{k_i^1}{v_1},\dfrac{k_i^2}{v_1},\dfrac{k_t^3}{v_2}\)\)\times \n 
 +\dfrac{1}{\mu_-}\,\(\({\bf A}_i\times \(\dfrac{k_i^1}{v_1},\dfrac{k_i^2}{v_1},\dfrac{k_i^3}{v_1}\)\)\times \n
+\({\bf A}_r\times \(\dfrac{k_i^1}{v_1},\dfrac{k_i^2}{v_1},\dfrac{k_r^3}{v_2}\)\)\times \n
\)
=0.
\end{equation}
%

Let us calculate the triple cross products, we use the formula $(a\times b)\times c=b(c\cdot a)-a(b\cdot c)$ and obtain
\begin{align*}
\({\bf A}_t\times \(\dfrac{k_i^1}{v_1},\dfrac{k_i^2}{v_1},\dfrac{k_t^3}{v_2}\)\)\times \n
&=
\({\bf A}_t\cdot \n \)\,\(\dfrac{k_i^1}{v_1},\dfrac{k_i^2}{v_1},\dfrac{k_t^3}{v_2}\)- \(\(\dfrac{k_i^1}{v_1},\dfrac{k_i^2}{v_1},\dfrac{k_t^3}{v_2}\)\cdot \n\)\,{\bf A}_t\\
&=\(A_3^t \frac{k_1^i}{v_1}-\frac{k_3^t}{v_2} A_1^t,A_3^t \frac{k_2^i}{v_1}-\frac{k_3^t}{v_2} A_2^t,0\)\\
\({\bf A}_r\times \(\dfrac{k_i^1}{v_1},\dfrac{k_i^2}{v_1},\dfrac{k_r^3}{v_2}\)\)\times \n&=\(A_3^r \frac{k_1^i}{v_1}-\frac{k_3^r}{v_1}A_1^r,A_3^r \frac{k_2^i}{v_1}-\frac{k_3^r}{v_1} A_2^r,0\)\\
\({\bf A}_i\times \(\dfrac{k_i^1}{v_1},\dfrac{k_i^2}{v_1},\dfrac{k_i^3}{v_1}\)\)\times \n
&=
\(A_3^i \frac{k_1^i}{v_1}-\frac{k_3^i}{v_1}A_1^i,A_3^i \frac{k_2^i}{v_1}-\frac{k_3^i}{v_1}A_2^i,0\).
\end{align*}
Replacing the obtained formulas above in \eqref{eq:another simplification}, and using the notation \eqref{eq:notation for wave vectors}, equations \eqref{eq:relations with mupmA} and \eqref{eq:relations with mupmB} follow.
\end{proof}

\begin{remark}\rm
We observe that analyzing the boundary condition \eqref{eq:density of mut bis bis} does not provide any additional information. Specifically, assume that the magnetic field ${\bf H}$ presented in Theorem \ref{thm:MAIN THEOREM GSL} is a distributional solution to \eqref{gausslawmagnetic}. 
 The jump on $\B$ is given by
\[
[[\B(X,t)]]=\lim_{x\to X,x\in \Omega_+}\mu_+ \H_t(x,t)-\lim_{x\to X,x\in\Omega_-}\mu_-\H'(x,t), \quad X\in \Gamma,
\]
and so from Lemma \ref{eq:corresponding magnetic fields} 
and \eqref{eq:density of mut bis bis}
\begin{align*}
[[\B(X,t)]]\cdot \n
&=-c\,\({\bf A}_t\times \(\dfrac{\k_t}{v_2}+\nabla f(X)\)\)\cdot \n \,
e^{\i\,\omega\(\frac{\k_t\cdot X}{v_2}+f(X)-t\)}\\
 &\qquad +
c\,\(\({\bf A}_i\times \frac{\k_i}{v_1}\)\cdot \n\,\,e^{\i\,\omega\(\frac{\k_i\cdot X}{v_1}-t\)}
+\({\bf A}_r\times \(\frac{\k_r}{v_1}+\nabla f(X)\)\)\cdot \n\,
e^{\i\,\omega\(\frac{\k_r\cdot X}{v_1}+f(X)-t\)}\)=0.
\end{align*}
As in the proof of Proposition \ref{prop:boundary condition from M4}, the exponentials are all equal 
and so cancelling them yields
\[
-\({\bf A}_t\times \(\dfrac{\k_t}{v_2}+\nabla f(X)\)\)\cdot \n
+
\({\bf A}_i\times \frac{\k_i}{v_1}\)\cdot \n +\({\bf A}_r\times \(\frac{\k_r}{v_1}+\nabla f(X)\)\)\cdot \n
=0.
\]
From the triple product formula $a\cdot (b\times c)=b\cdot (c\times a)$ and \eqref{eq:generalized snell law with psi}, it follows that
\begin{align*}
0&=
-{\bf A}_t\cdot \( \(\dfrac{k_1^i}{v_1},\dfrac{k_2^i}{v_1}, \dfrac{k_3^t}{v_2}\) \times \n \)
+
{\bf A}_i\cdot \( \(\frac{k_1^i}{v_1}, \frac{k_2^i}{v_1}, \frac{k_3^i}{v_1}\)\times  \n \)
+{\bf A}_r\cdot \(\(\frac{k_1^i}{v_1},\frac{k_2^i}{v_1},\frac{k_3^r}{v_1}\) \times \n\)\\
&=
-{\bf A}_t\cdot \(\frac{k_2^i}{v_1},-\frac{k_1^i}{v_1},0\)
+
{\bf A}_i\cdot \(\frac{k_2^i}{v_1},-\frac{k_1^i}{v_1},0\)
+
{\bf A}_r\cdot \(\frac{k_2^i}{v_1},-\frac{k_1^i}{v_1},0\)\\
&=
\(-{\bf A}_t+{\bf A}_i+{\bf A}_r\) \(\frac{k_2^i}{v_1},-\frac{k_1^i}{v_1},0\)
\end{align*}
which written in terms of $m$'s is 
\begin{equation}\label{eq:relation coming from the field B}
m_2^i\,\(-A_1^t+A_1^i+A_1^r\)-
m_1^i\,\(-A_2^t+A_2^i+A_2^r\)= 0.
\end{equation}
From equations \eqref{eq:equation second components} and \eqref{eq:equation first components}, equation \eqref{eq:relation coming from the field B} is satisfied. Consequently, the boundary condition \eqref{eq:density of mut bis bis} does not provide any additional equations for the amplitudes.
\end{remark}

\setcounter{equation}{0}
\section{Calculation of the amplitude coefficients}\label{sec:calculation of the amplitudes of the reflected and transmitted fields BIS}
Recall these amplitudes are 
$
{\bf A}_i=(A_1^i,A_2^i,A_3^i), {\bf A}_r=(A_1^r,A_2^r,A_3^r)$, and ${\bf A}_t=(A_1^t,A_2^t,A_3^t)$,
where the unknowns are ${\bf A}_r$ and ${\bf A}_t$. 
The purpose of the section is to find explicit formulas for ${\bf A}_r$ and ${\bf A}_t$ in terms of ${\bf A}_i$ and the wave vectors.
\cristian{Listing all the relationships obtained for the amplitudes in a table we get}
\[
\begin{matrix}
A_1^r & A_2^r & A_3^r & A_1^t & A_2^t & A_3^t &  A_1^i & A_2^i & A_3^i & \text{From equation}\\
\hline
1 & 0 & 0 & -1 & 0 & 0 & 1 & 0 & 0 & \eqref{eq:equation first components}\\
0 & -1 & 0 & 0 & 1 & 0 & 0 & -1 & 0 & \eqref{eq:equation second components}\\
0 & 0 & \epsilon_- & 0 & 0 & -\epsilon_+ & 0 & 0 & \epsilon_- & \eqref{eq:equation third components}\\
-m_3^r/\mu_- & 0 & m_1^i/\mu_- & m_3^t/\mu_+ & 0 & -m_1^i/\mu_+ & -m_3^i/\mu_-& 0 & m_1^i/\mu_- & \eqref{eq:relations with mupmA}\\
0 & -m_3^r/\mu_- & m_2^i/\mu_- & 0 & m_3^t/\mu_+ & -m_2^i/\mu_+ & 0 & -m_3^i/\mu_- & m_2^i/\mu_- &  \eqref{eq:relations with mupmB}\\
0 & 0 & 0 & 0 & 0 & 0 & m_1^i & m_2^i & m_3^i & \eqref{eq:orthogonality for Ai}\\
m_1^i & m_2^i & m_3^r  & 0 & 0 & 0 & 0 & 0 & 0 & \eqref{eq:orthogonality for Ar}\\
 0 & 0 & 0 & m_1^i & m_2^i & m_3^t  & 0 & 0 & 0 & \eqref{eq:orthogonality for At}
 \end{matrix}.
\]
\cristian{We remark that the first five rows in the table, \eqref{eq:equation first components}, \eqref{eq:equation second components},
\eqref{eq:equation third components}, \eqref{eq:relations with mupmA}, and \eqref{eq:relations with mupmB} are all consequences of the boundary conditions, and the last three rows in the table, \eqref{eq:orthogonality for Ai}, \eqref{eq:orthogonality for Ar}, and \eqref{eq:orthogonality for At}, are orthogonality type conditions derived in Lemma \ref{lm:orthogonality conditions for amplitudes} from the fact that $\E,\H$ satisfy the Maxwell system. Notice that we have six unknowns, the components of ${\bf A}_t$ and ${\bf A}_r$, and eight equations. The system will be first reduced to solve \eqref{eq:equation for At in terms of Ai} for ${\bf A}_t$, and substitute this value in \eqref{eq:Ar in terms of At and Ai} to get ${\bf A}_r$. A necessary and sufficient condition for the solvability of the system \eqref{eq:equation for At in terms of Ai} is given in the following section.}
The coefficient matrix of the system is then
\[
M=\begin{bmatrix}
1 & 0 & 0 & -1 & 0 & 0 & 1 & 0 & 0 \\
0 & -1 & 0 & 0 & 1 & 0 & 0 & -1 & 0 \\
0 & 0 & \epsilon_- & 0 & 0 & -\epsilon_+ & 0 & 0 & \epsilon_- \\
-m_3^r/\mu_- & 0 & m_1^i/\mu_- & m_3^t/\mu_+ & 0 & -m_1^i/\mu_+ & -m_3^i/\mu_-& 0 & m_1^i/\mu_- \\
0 & -m_3^r/\mu_- & m_2^i/\mu_- & 0 & m_3^t/\mu_+ & -m_2^i/\mu_+ & 0 & -m_3^i/\mu_- & m_2^i/\mu_- \\
0 & 0 & 0 & 0 & 0 & 0 & m_1^i & m_2^i & m_3^i \\
m_1^i & m_2^i & m_3^r  & 0 & 0 & 0 & 0 & 0 & 0 \\
 0 & 0 & 0 & m_1^i & m_2^i & m_3^t  & 0 & 0 & 0 
 \end{bmatrix}.
\]
For the calculation of the amplitudes it is convenient to write this matrix in terms of blocks. 
If we set
\begin{align*}
M_1
&=
\begin{bmatrix}
1 & 0 & 0 \\
0 & -1 & 0  \\
0 & 0 & \epsilon_- 
 \end{bmatrix},\qquad
 M_2
 =
\begin{bmatrix}
-1 & 0 & 0  \\
 0 & 1 & 0  \\
 0 & 0 & -\epsilon_+ 
 \end{bmatrix} \\
N_1
&=
\begin{bmatrix}
-m_3^r/\mu_- & 0 & m_1^i/\mu_-  \\
0 & -m_3^r/\mu_- & m_2^i/\mu_- \\
0 & 0 & 0  \\
m_1^i & m_2^i & m_3^r   \\
 0 & 0 & 0  
 \end{bmatrix},\qquad
 N_2
 =
 \begin{bmatrix}
 m_3^t/\mu_+ & 0 & -m_1^i/\mu_+  \\
 0 & m_3^t/\mu_+ & -m_2^i/\mu_+ \\
0 & 0 & 0  \\
 0 & 0 & 0  \\
  m_1^i & m_2^i & m_3^t   
 \end{bmatrix},\\
 N_3
 &=
 \begin{bmatrix}
 -m_3^i/\mu_-& 0 & m_1^i/\mu_- \\
 0 & -m_3^i/\mu_- & m_2^i/\mu_- \\
 m_1^i & m_2^i & m_3^i \\
 0 & 0 & 0 \\
  0 & 0 & 0 
 \end{bmatrix},
\end{align*}
then we can write 
$
M
=
\begin{bmatrix}
M_1 & M_2 & M_1\\
N_1 & N_2 & N_3
\end{bmatrix}$.
Therefore the amplitudes must verify the equations, \cristian{written with ${\bf A}_r,{\bf A}_t,{\bf A}_i$ column 3-vectors},
\[
\begin{bmatrix}
M_1 & M_2 & M_1\\
N_1 & N_2 & N_3
\end{bmatrix}
\begin{bmatrix}
{\bf A}_r\\
{\bf A}_t\\
{\bf A}_i
\end{bmatrix}
=
0\in \R^8
\]
which means
\begin{equation*}
M_1 {\bf A}_r+M_2 {\bf A}_t+M_1{\bf A}_i
=0\in \R^3,\qquad 
N_1 {\bf A}_r+N_2 {\bf A}_t+N_3{\bf A}_i
=0\in \R^5.
\end{equation*}
From the first equation and since the matrix $M_1$ is invertible
\begin{equation}\label{eq:Ar in terms of At and Ai}
{\bf A}_r=-M_1^{-1}M_2 {\bf A}_t -{\bf A}_i
\end{equation}
which substituted in the second equation yields
\begin{equation}\label{eq:equation for At in terms of Ai}
\(N_2 -N_1 M_1^{-1}M_2 \){\bf A}_t=\(N_1-N_3\){\bf A}_i.
\end{equation}
Now
\[
P =
N_2 -N_1 M_1^{-1}M_2= 
\begin{bmatrix}
-\dfrac{m_3^r}{\mu_-} + \dfrac{m_3^t}{\mu_+} & 0 & \(\dfrac{\epsilon_+ }{\epsilon_- \mu_-} - \dfrac{1}{\mu_+}\)m_1^i \\
0 & -\dfrac{m_3^r}{\mu_-} + \dfrac{m_3^t}{\mu_+} & \(\dfrac{\epsilon_+ }{\epsilon_- \mu_-} - \dfrac{1}{\mu_+}\)m_2^i \\
0 & 0 & 0 \\
m_1^i & m_2^i & \dfrac{\epsilon_+ m_3^r}{\epsilon_-} \\
m_1^i & m_2^i & m_3^t
\end{bmatrix}
\]
and
\[
Q =N_1-N_3
=\begin{bmatrix}
\dfrac{m_3^i - m_3^r}{\mu_-} & 0 & 0 \\
0 & \dfrac{m_3^i - m_3^r}{\mu_-} & 0 \\
- m_1^i & - m_2^i & - m_3^i \\
m_1^i & m_2^i & m_3^r \\
0 & 0 & 0
\end{bmatrix}.
\]
Next 
\[
Q {\bf A}_i =
\begin{bmatrix}
\displaystyle \frac{A_1^i (m_3^i - m_3^r)}{\mu_-} \\
\displaystyle \frac{A_2^i (m_3^i - m_3^r)}{\mu_-} \\
- A_1^i m_1^i - A_2^i m_2^i - A_3^i m_3^i \\
A_1^i m_1^i + A_2^i m_2^i + A_3^i m_3^r \\
0
\end{bmatrix}
=
\begin{bmatrix}
\displaystyle \frac{A_1^i (m_3^i - m_3^r)}{\mu_-} \\
\displaystyle \frac{A_2^i (m_3^i - m_3^r)}{\mu_-} \\
0 \\
A_3^i \(m_3^r-m_3^i\) \\
0
\end{bmatrix}.
\]
Therefore, \eqref{eq:equation for At in terms of Ai} has a solution ${\bf A}_t$ if the vector $QA_i$ is in the column space of the matrix $P$, and hence ${\bf A}_r$ follows from \eqref{eq:Ar in terms of At and Ai}. 

\subsection{Solvability of the system \eqref{eq:equation for At in terms of Ai}}

We shall prove the following proposition.
\begin{proposition}\label{prop:formulas for amplitudes}
The system \eqref{eq:equation for At in terms of Ai} is solvable if and only if 
\begin{equation}\label{eq:solvability condition involving A3i}
A_3^i m_3^i\dfrac{1}{\mu_-}
=
\(-\delta  \,m_3^t +\alpha_1 m_1^i + \alpha_2 m_2^i\)A_3^i\dfrac{1}{\alpha_3-m_3^t}
\end{equation} 
holds, where 
\begin{align*}
\delta = \frac{m_3^t}{\mu_+} - \frac{m_3^r}{\mu_-}, \quad
\alpha_1 = 
\(\frac{\epsilon_+ }{\epsilon_- \mu_-} - \frac{1}{\mu_+}\)m_1^i, 
\quad \alpha_2 &
=
\(\frac{\epsilon_+ }{\epsilon_- \mu_-} - \frac{1}{\mu_+}\)m_2^i, \quad
\alpha_3 = \frac{\epsilon_+ m_3^r}{\epsilon_-};
\end{align*}
notice that $\delta>0$ and $\alpha_3<0$ since $m_3^t>0$ and $m_3^r<0$.
Moreover, we obtain the following formula for the amplitude ${\bf A}_t$:
\begin{equation}\label{eq:formulas for At in general}
\begin{cases}
A_1^t
&=
\dfrac{m_3^i - m_3^r}{-\dfrac{m_3^r}{\mu_-} + \dfrac{m_3^t}{\mu_+}}
\(\dfrac{A_1^i }{\mu_-}
-
 \(\dfrac{\epsilon_+ }{\epsilon_- \mu_-} - \dfrac{1}{\mu_+}\)m_1^i
\dfrac{ \epsilon_-}{\epsilon_-m_3^t-\epsilon_+ m_3^r}A_3^i\)\\
A_2^t
&=
\dfrac{m_3^i - m_3^r}{-\dfrac{m_3^r}{\mu_-} + \dfrac{m_3^t}{\mu_+}}
\(\dfrac{A_2^i }{\mu_-}
-
 \(\dfrac{\epsilon_+ }{\epsilon_- \mu_-} - \dfrac{1}{\mu_+}\)m_2^i
\dfrac{ \epsilon_-}{\epsilon_-m_3^t-\epsilon_+ m_3^r}A_3^i\)\\
A_3^t&=\dfrac{ \epsilon_-\(m_3^i-m_3^r\)}{\epsilon_- m_3^t-\epsilon_+ m_3^r}A_3^i.
\end{cases}
\end{equation}
Inserting this value of ${\bf A}_t$ into \eqref{eq:Ar in terms of At and Ai} we obtain the amplitude ${\bf A}_r$.
\end{proposition}

\begin{proof}
\cristian{Since the third row of the system \eqref{eq:equation for At in terms of Ai} is zero, the system of equations to solve is}
\[
\begin{bmatrix}
-\dfrac{m_3^r}{\mu_-} + \dfrac{m_3^t}{\mu_+} & 0 & \(\dfrac{\epsilon_+ }{\epsilon_- \mu_-} - \dfrac{1}{\mu_+}\)m_1^i \\
0 & -\dfrac{m_3^r}{\mu_-} + \dfrac{m_3^t}{\mu_+} & \(\dfrac{\epsilon_+ }{\epsilon_- \mu_-} - \dfrac{1}{\mu_+}\)m_2^i \\
m_1^i & m_2^i & \dfrac{\epsilon_+ m_3^r}{\epsilon_-} \\
m_1^i & m_2^i & m_3^t
\end{bmatrix}
\begin{bmatrix}
A_1^t\\
A_2^t\\
A_3^t
\end{bmatrix}
=
\begin{bmatrix}
\displaystyle \frac{A_1^i (m_3^i - m_3^r)}{\mu_-} \\
\displaystyle \frac{A_2^i (m_3^i - m_3^r)}{\mu_-} \\
A_3^i \(m_3^r-m_3^i\) \\
0
\end{bmatrix}.
\]
Subtracting the third equation from the fourth we need to solve
\[
\begin{bmatrix}
-\dfrac{m_3^r}{\mu_-} + \dfrac{m_3^t}{\mu_+} & 0 & \(\dfrac{\epsilon_+ }{\epsilon_- \mu_-} - \dfrac{1}{\mu_+}\)m_1^i \\
0 & -\dfrac{m_3^r}{\mu_-} + \dfrac{m_3^t}{\mu_+} & \(\dfrac{\epsilon_+ }{\epsilon_- \mu_-} - \dfrac{1}{\mu_+}\)m_2^i \\
m_1^i & m_2^i & \dfrac{\epsilon_+ m_3^r}{\epsilon_-} \\
0 & 0 & m_3^t-\dfrac{\epsilon_+ m_3^r}{\epsilon_-}
\end{bmatrix}
\begin{bmatrix}
A_1^t\\
A_2^t\\
A_3^t
\end{bmatrix}
=
\begin{bmatrix}
\displaystyle \frac{A_1^i (m_3^i - m_3^r)}{\mu_-} \\
\displaystyle \frac{A_2^i (m_3^i - m_3^r)}{\mu_-} \\
A_3^i \(m_3^r-m_3^i\) \\
-A_3^i \(m_3^r-m_3^i\)
\end{bmatrix},
\]
which written in terms of $\delta,\alpha_1,\alpha_2,\alpha_3$ is the system
\[
\begin{bmatrix}
\delta & 0 & \alpha_1 \\
0 & \delta & \alpha_2 \\
m_1^i & m_2^i & \alpha_3 \\
0 & 0 & m_3^t-\alpha_3
\end{bmatrix}
\begin{bmatrix}
A_1^t\\
A_2^t\\
A_3^t
\end{bmatrix}
=
\begin{bmatrix}
\displaystyle \frac{A_1^i (m_3^i - m_3^r)}{\mu_-} \\
\displaystyle \frac{A_2^i (m_3^i - m_3^r)}{\mu_-} \\
A_3^i \(m_3^r-m_3^i\) \\
-A_3^i \(m_3^r-m_3^i\)
\end{bmatrix}.
\]
Dividing rows one and two by $\delta$ gives
\[
\begin{bmatrix}
1 & 0 & \alpha_1/\delta \\
0 & 1 & \alpha_2/\delta \\
m_1^i & m_2^i & \alpha_3 \\
0 & 0 & m_3^t-\alpha_3
\end{bmatrix}
\begin{bmatrix}
A_1^t\\
A_2^t\\
A_3^t
\end{bmatrix}
=
\begin{bmatrix}
\displaystyle \frac{A_1^i (m_3^i - m_3^r)}{\delta \mu_-} \\
\displaystyle \frac{A_2^i (m_3^i - m_3^r)}{\delta \mu_-} \\
A_3^i \(m_3^r-m_3^i\) \\
-A_3^i \(m_3^r-m_3^i\)
\end{bmatrix};
\]
multiplying the first row by $-m_1^i$ and add in it to the third row gives
\[
\begin{bmatrix}
1 & 0 & \alpha_1/\delta \\
0 & 1 & \alpha_2/\delta \\
0 & m_2^i & \alpha_3-m_1^i (\alpha_1/\delta) \\
0 & 0 & m_3^t-\alpha_3
\end{bmatrix}
\begin{bmatrix}
A_1^t\\
A_2^t\\
A_3^t
\end{bmatrix}
=
\begin{bmatrix}
\displaystyle \frac{A_1^i (m_3^i - m_3^r)}{\delta \mu_-} \\
\displaystyle \frac{A_2^i (m_3^i - m_3^r)}{\delta \mu_-} \\
A_3^i \(m_3^r-m_3^i\)-m_1^i \dfrac{A_1^i (m_3^i - m_3^r)}{\delta \mu_-} \\
-A_3^i \(m_3^r-m_3^i\)
\end{bmatrix};
\]
multiplying the second row by $-m_2^i$ and add in it to the third row gives
\[
\begin{bmatrix}
1 & 0 & \alpha_1/\delta \\
0 & 1 & \alpha_2/\delta \\
0 & 0 & \alpha_3-m_1^i (\alpha_1/\delta)-m_2^i (\alpha_2/\delta) \\
0 & 0 & m_3^t-\alpha_3
\end{bmatrix}
\begin{bmatrix}
A_1^t\\
A_2^t\\
A_3^t
\end{bmatrix}
=
\begin{bmatrix}
\displaystyle \frac{A_1^i (m_3^i - m_3^r)}{\delta \mu_-} \\
\displaystyle \frac{A_2^i (m_3^i - m_3^r)}{\delta \mu_-} \\
A_3^i \(m_3^r-m_3^i\)-m_1^i \dfrac{A_1^i (m_3^i - m_3^r)}{\delta \mu_-}-m_2^i \dfrac{A_2^i (m_3^i - m_3^r)}{\delta \mu_-} \\
-A_3^i \(m_3^r-m_3^i\)
\end{bmatrix}.
\]
From the fourth equation, the value of $A_3^t$ is given by
\[
A_3^t=\dfrac{-A_3^i \(m_3^r-m_3^i\)}{m_3^t-\alpha_3}.
\]
We shall verify that \eqref{eq:solvability condition involving A3i} implies that this value of $A_3^t$ satisfies the third equation, i.e., under \eqref{eq:solvability condition involving A3i} equations three and four are equivalent. In fact, substituting this value of $A_3^t$ on the left hand side of the third equation we need to prove the identity
\[
\(\alpha_3-m_1^i (\alpha_1/\delta)-m_2^i (\alpha_2/\delta)\)\dfrac{A_3^i \(m_3^i-m_3^r\)}{m_3^t-\alpha_3}
=
A_3^i \(m_3^r-m_3^i\)-m_1^i \dfrac{A_1^i (m_3^i - m_3^r)}{\delta \mu_-}-m_2^i \dfrac{A_2^i (m_3^i - m_3^r)}{\delta \mu_-}.\]
Since $m_1^iA_1^i+m_2^i A_2^i=-m_3^i A_3^i$, this identity is equivalent to
\[
\(\alpha_3-m_1^i (\alpha_1/\delta)-m_2^i (\alpha_2/\delta)\)\dfrac{A_3^i \(m_3^i-m_3^r\)}{m_3^t-\alpha_3}
=
A_3^i \(m_3^r-m_3^i\)+m_3^i A_3^i\dfrac{m_3^i - m_3^r}{\delta \mu_-}.\]
Now notice that moving terms around and since $m_3^i-m_3^r\neq 0$, the last identity is equivalent to \eqref{eq:solvability condition involving A3i}.
Hence \eqref{eq:formulas for At in general} follows.

\cristian{Notice that given ${\bf A}_i$, the amplitude solution ${\bf A}_t$, and consequently ${\bf A}_r$, are unique since the null space of the matrix 
\[
\begin{bmatrix}
1 & 0 & \alpha_1/\delta \\
0 & 1 & \alpha_2/\delta \\
0 & 0 & \alpha_3-m_1^i (\alpha_1/\delta)-m_2^i (\alpha_2/\delta) \\
0 & 0 & m_3^t-\alpha_3
\end{bmatrix}
\]
is zero since $m_3^r<0$ and so $m_3^t-\alpha_3>0$.} 

\end{proof}

\subsection{Analysis of the condition \eqref{eq:solvability condition involving A3i}}

In case $A_3^i= 0$ (the incident wave is transverse electric (TE), that is, it is perpendicular to the normal to the interface), condition \eqref{eq:solvability condition involving A3i} obviously holds
and from \eqref{eq:formulas for At in general} the amplitudes are 
\[
\(A_1^t,A_2^t,A_3^t\)= \dfrac{\mu_+\(m_3^i-m_3^r\)}{\mu_- m_3^t-\mu_+m_3^r}\(A_1^i,A_2^i,0\),
\]
and hence from \eqref{eq:Ar in terms of At and Ai}
\[
\(A_1^r,A_2^r,A_3^r\)=
\dfrac{\mu_+\(m_3^i-m_3^r\)}{\mu_- m_3^t-\mu_+m_3^r}
\begin{bmatrix}
1 & 0 & 0\\
0 & 1 & 0\\
0 & 0 & \epsilon_+/\epsilon_-
\end{bmatrix}
{\bf A}_i
-{\bf A}_i
=
\(\dfrac{\mu_+\(m_3^i-m_3^r\)}{\mu_- m_3^t-\mu_+m_3^r}-1\)
\(A_1^i,A_2^i,0\).
\]

In case $A_3^i\neq  0$,
if \eqref{eq:solvability condition involving A3i} holds, then some relationships between the wave vectors $m^\ell$, $\ell=i,r,t$, must be satisfied.
Indeed, cancelling $A_3^i$ in \eqref{eq:solvability condition involving A3i} for the solvability of the system we must have
\begin{align*}
m_3^i\dfrac{\frac{\epsilon_+ m_3^r}{\epsilon_-}-m_3^t}{\mu_-}=m_3^i\dfrac{\alpha_3-m_3^t}{\mu_-}
&=
-\delta  \,m_3^t +\alpha_1 m_1^i + \alpha_2 m_2^i\\
&=
-\(\frac{m_3^t}{\mu_+} - \frac{m_3^r}{\mu_-}\)  \,m_3^t 
+\(\frac{\epsilon_+ }{\epsilon_- \mu_-} - \frac{1}{\mu_+}\)\(\(m_1^i\)^2 
+\(m_2^i\)^2\).
\end{align*}
Re writing this expression we get 
\begin{equation}\label{eq:condition for solvability 5 by 3 matrix}
m_3^i\dfrac{\epsilon_+ m_3^r- \epsilon_- m_3^t}{\mu_- \epsilon_-}
=
-\(\frac{m_3^t}{\mu_+} - \frac{m_3^r}{\mu_-}\)  \,m_3^t 
+\(\frac{\epsilon_+ \mu_+-\epsilon_- \mu_-}{\epsilon_- \mu_- \mu_+} \)
\(\(m_1^i\)^2 +\(m_2^i\)^2\).
\end{equation}
Since $(m_1^i)^2+(m_2^i)^2+(m_3^i)^2=\omega^2/v_1^2=\omega^2 \,\epsilon_- \mu_-$, 
we have $(m_1^i)^2+(m_2^i)^2=\omega^2 \,\epsilon_- \mu_- -(m_3^i)^2$ which substituted in the last expression yields
\[
m_3^i\dfrac{\epsilon_+ m_3^r- \epsilon_- m_3^t}{\mu_- \epsilon_-}
=
-\(\frac{m_3^t}{\mu_+} - \frac{m_3^r}{\mu_-}\)  \,m_3^t 
+\(\frac{\epsilon_+ \mu_+-\epsilon_- \mu_-}{\epsilon_- \mu_- \mu_+} \)
\(\omega^2 \,\epsilon_- \mu_- -(m_3^i)^2\).
\]
From \eqref{eq:formula for m3t in terms of m3r},
$
m_3^t = \sqrt{ \omega^2(\epsilon_+ \mu_+ - \epsilon_- \mu_-) + (m_3^r)^2 }$,
which substituted in the last expression we get after simplification that the difference between the left-hand and right-hand sides becomes:
\[
(m_3^i + m_3^r)
\left(
\epsilon_+ \mu_+ m_3^i  - \epsilon_- \mu_- m_3^i  + \epsilon_- \mu_- m_3^r  - \epsilon_- \mu_+ 
m_3^t
\right)
=0
\]
Thus, \eqref{eq:condition for solvability 5 by 3 matrix} holds if and only if:
\begin{equation}\label{eq:first compatibility condition}
m_3^i + m_3^r = 0
\end{equation}
or
\begin{equation}\label{eq:second compatibility condition}
\(\epsilon_+ \mu_+ - \epsilon_- \mu_- \)m_3^i
= \epsilon_- \(\mu_+ m_3^t-\mu_- m_3^r\).
\end{equation}
From Equation \eqref{eq:m3r}
$
m_3^r=-\sqrt{\omega^2 \mu_- \epsilon_- -\(m_1^i-\omega f_{x_1}\)^2-\(m_2^i-\omega f_{x_2}\)^2}$,
and since $m_3^i=\sqrt{\omega^2 \mu_- \epsilon_--\(m_1^i\)^2-\(m_2^i\)^2}$, \eqref{eq:first compatibility condition} means
\[
\sqrt{\omega^2 \mu_- \epsilon_- -\(m_1^i-\omega f_{x_1}\)^2-\(m_2^i-\omega f_{x_2}\)^2}
=
\sqrt{\omega^2 \mu_- \epsilon_- -\(m_1^i\)^2-\(m_2^i\)^2}
\]
which implies $\(m_1^i-\omega f_{x_1}\)^2+\(m_2^i-\omega f_{x_2}\)^2=\(m_1^i\)^2+\(m_2^i\)^2$.
So \eqref{eq:first compatibility condition} holds if and only if the gradient of $f$ satisfies
\[
\omega \((f_{x_1})^2+(f_{x_2})^2\)=2\(m_1^i f_{x_1}+m_2^i f_{x_2}\).
\]
This is clearly satisfied for the standard refraction case when $f=0$.

Suppose $m_3^i + m_3^r \neq  0$. Notice that since $m_3^i>0, m_3^r<0$, and $m_3^t>0$, it follows that  
$\mu_+ m_3^t-\mu_- m_3^r>0$
and so $\epsilon_+ \mu_+ > \epsilon_- \mu_-$.
Therefore if $\epsilon_+ \mu_+ \leq \epsilon_- \mu_-$, then from \eqref{eq:second compatibility condition} for the system \eqref{eq:equation for At in terms of Ai} to be solvable we must have \eqref{eq:first compatibility condition} or $A_3^i=0$.

\section{Appendix}\label{app:proof of exponential lemma}
Here we prove Lemma \ref{lm:exponential gradient} used in the proof of Theorem \ref{thm:MAIN THEOREM GSL}.

%
\begin{proof}
Differentiating \eqref{eq:general exponential equation a b c} with respect to $x_1$ and dividing by $\i$ yields
\begin{equation}\label{eq:differentiated with respect to x1 equation 1 BIS}
a \,m_1^i\,e^{\i\,\(\m_i\cdot X\)}
+b\,\(m_1^r+\psi_{x_1}\) \,e^{\i\,\(\m_r\cdot X+\psi(X)\)}
+
c \,\(m_1^t+\psi_{x_1}\)\,e^{\i\,\(\m_t\cdot X+\psi(X)\)}=0.
\end{equation}
Next differentiate \eqref{eq:differentiated with respect to x1 equation 1 BIS} with respect to $x_1$ to get
\begin{align*}
&\i\,a \,(m_1^i)^2\,e^{\i\,\(\m_i\cdot X\)}
+b\,\(\psi_{x_1x_1} 
+\i\,\(m_1^r+\psi_{x_1}\)^2\) \,e^{\i\,\(\m_r\cdot X+\psi(X)\)}\\
&\qquad  +
c \,\(\psi_{x_1x_1}
+
\i\,\(m_1^t+\psi_{x_1}\)^2\)\,e^{\i\,\(\m_t\cdot X+\psi(X)\)}
=0.
\end{align*}
Putting together \eqref{eq:general exponential equation a b c}, \eqref{eq:differentiated with respect to x1 equation 1 BIS}, and the last equation yields the following system
\[
\(
\begin{matrix}
1 & 1 & 1\\
m_1^i & m_1^r+\psi_{x_1}  &  m_1^t+\psi_{x_1} \\
\i\,(m_1^i)^2 &
 \psi_{x_1x_1} 
+\i\,\(m_1^r+\psi_{x_1}\)^2  &
 \psi_{x_1x_1}
+
\i\,\(m_1^t+\psi_{x_1}\)^2 \\
\end{matrix}
\)
\(
\begin{matrix}
a\,e^{\i\,\(\m_i\cdot X\)}\\ 
b\,e^{\i\,\(\m_r\cdot X+\psi(X)\)}\\ 
c\,e^{\i\,\(\m_t\cdot X+\psi(X)\)}\\
\end{matrix}
\)=0.
\]
If $a\neq 0$, or $b\neq 0$,  or $c\neq 0$, then the vector $\(
\begin{matrix}
a\,e^{\i\,\(\m_i\cdot X\)}\\ 
b\,e^{\i\,\(\m_r\cdot X+\psi(X)\)}\\ 
c\,e^{\i\,\(\m_t\cdot X+\psi(X)\)}\\
\end{matrix}
\)$ is a non trivial solution to the system and therefore the matrix 
\begin{align*}
M(x_1,x_2)= \(
\begin{matrix}
1 & 1 & 1\\
m_1^i & (m_1^r+\psi_{x_1} & m_1^t+\psi_{x_1}\\
\i\,(m_1^i)^2 &
\psi_{x_1x_1} 
+\i\,\(m_1^r+\psi_{x_1}\)^2 &
\psi_{x_1x_1}
+
\i\,\(m_1^t+\psi_{x_1}\)^2\\
\end{matrix}
\)
\end{align*}
has determinant equals zero.
Multiplying the first row of $M$ by $-m_1^i$ and adding it to the second row, and 
multiplying the first row by $-\i\,(m_1^i)^2$ and adding it to the third row yields
\[
\(
\begin{matrix}
1 & 1 & 1\\
0 & -m_1^i+m_1^r+\psi_{x_1} & -m_1^i+m_1^t+\psi_{x_1}\\
0 &
\psi_{x_1x_1} 
+\i\,\(\(m_1^r+\psi_{x_1}\)^2-(m_1^i)^2\) &
\psi_{x_1x_1}
+
\i\,\(\(m_1^t+\psi_{x_1}\)^2-(m_1^i)^2\)\\
\end{matrix}
\).
\]
So the determinant of the last matrix is zero and 
factoring the difference of squares yields
\begin{align*}
&\(-m_1^i+m_1^r+\psi_{x_1}\)
\(\psi_{x_1x_1}+\i\,\(m_1^t+\psi_{x_1}-m_1^i\)\(m_1^t+\psi_{x_1}+m_1^i\)
\)\\
&\qquad -
\(-m_1^i+m_1^t+\psi_{x_1}\)
\(\psi_{x_1x_1} 
+\i\,\(m_1^r+\psi_{x_1}-m_1^i\)\(m_1^i+m_1^r+\psi_{x_1}\)\)=0
\end{align*}
that is,
\begin{equation}\label{eq: equation equals zero for first components BIS}
\(m_1^r-m_1^t\)\,
\(\psi_{x_1x_1}-\i\,\(-m_1^i+m_1^r+\psi_{x_1}\)\,\(m_1^t+\psi_{x_1}-m_1^i\)\)=0.
\end{equation}
Proceeding in the same way differentiating \eqref{eq:general exponential equation a b c}
with respect to $x_2$ we obtain for the second components the equation
\begin{equation}\label{eq: equation equals zero for second components BIS}
\(m_2^r-m_2^t\)\,
\(\psi_{x_2x_2}-\i\,\(-m_2^i+m_2^r+\psi_{x_2}\)\,\(m_2^t+\psi_{x_2}-m_2^i\)\)=0.
\end{equation}

{\it Proof of (i).}
We shall prove first that $m_1^t= m_1^r$, and a similar argument proves that $m_2^t= m_2^r$. 
Suppose by contradiction that $m_1^t\neq m_1^r$, then from \eqref{eq: equation equals zero for first components BIS} we must have
\[
\psi_{x_1x_1}-\i\,\(-m_1^i+m_1^r+\psi_{x_1}\)\,\(m_1^t+\psi_{x_1}-m_1^i\)=0
\]
and since the wave vectors $m^\ell$ have real components for $\ell=i,r,t$ and the function $\psi$ is real valued we obtain that
\[
\psi_{x_1x_1}(x_1,x_2)=0 \text{ and } 
\(-m_1^i+m_1^r+\psi_{x_1}\)\,\(m_1^t+\psi_{x_1}-m_1^i\)=0.
\]
which implies that 
$
\psi_{x_1}(x_1,x_2)=g(x_2)
$,
and 
\begin{equation}\label{eq:pair of equations for mr and mt BIS}
m_1^i-m_1^r=\psi_{x_1}(x_1,x_2),\text{  or  } m_1^i-m_1^t=\psi_{x_1}(x_1,x_2)
\end{equation}
for all $x_1,x_2$. Since $\psi_{x_1}$ is continuous we get that $\psi_{x_1}$ is constant. 
Then $\psi(x_1,x_2)=c_0\,x_1+h(x_2)$ with $c_0$ constant.
Also at least one of the equations in \eqref{eq:pair of equations for mr and mt BIS} holds, so $m_1^i-m_1^r=c_0$ or $m_1^i-m_1^t=c_0$. Suppose first that $m_1^i-m_1^r=c_0$.
Substituting the value of $m_1^r$ into \eqref{eq:general exponential equation a b c} and letting $x_2=0$ yields
\begin{align*}
&a \,e^{\i\,\(m_1^i\,x_1\)}
+b \,e^{\i\,\(m_1^r\,x_1+\psi(x_1,0)\)}
+
c \,e^{\i\,\(m_1^t\,x_1+\psi(x_1,0)\)}\\
&=a \,e^{\i\,\(m_1^i\,x_1\)}
+b \,e^{\i\,\(m_1^r\,x_1+c_0\,x_1+h(0)\)}
+
c \,e^{\i\,\(m_1^t\,x_1+c_0\,x_1+h(0)\)}\\
&=
a \,e^{\i\,\(m_1^i\,x_1\)}
+b \,e^{\i\,\(m_1^i\,x_1+h(0)\)}
+
c \,e^{\i\,\(m_1^t\,x_1+c_0\,x_1+h(0)\)}\\
&=
\(a \,
+b \,e^{\i\,h(0)}\)\,e^{\i\,\(m_1^i\,x_1\)}
+
c \,e^{\i\,\(m_1^t\,x_1+c_0\,x_1+h(0)\)}=0.
\end{align*}
Letting 
$
A=a \,
+b \,e^{\i\,h(0)}
$
and  
$B=c\,e^{\i\,h(0)}$
we obtain that
\[
A\,e^{\i\,\(m_1^i\,x_1\)}
+
B \,e^{\i\,\(m_1^t\,x_1+c_0\,x_1\)}=0
\]
for all $x_1$. 
Differentiating the last expression with respect to $x_1$ and dividing by $\i$ yields
\[
A\,m_1^i\,e^{\i\,\(m_1^i\,x_1\)}
+
B \(m_1^t+c_0\)\,e^{\i\,\(m_1^t\,x_1+c_0\,x_1\)}=0
\]
which written in matrix form is
\[
\(
\begin{matrix}
1 & 1\\
m_1^i & m_1^t+c_0
\end{matrix}
\)
\(
\begin{matrix}
A\,e^{\i\,\(m_1^i\,x_1\)}\\
B \,e^{\i\,\(m_1^t\,x_1+c_0\,x_1\)}
\end{matrix}
\)
=0.
\]
Since $c\neq 0$,  the vector $\(
\begin{matrix}
A\,e^{\i\,\(m_1^i\,x_1\)}\\
B \,e^{\i\,\(m_1^t\,x_1+c_0\,x_1\)}
\end{matrix}
\)\neq 0$, then we obtain $c_0=m_1^i-m_1^t$. Since we assumed $c_0=m_1^i-m_1^r$, we get that $m_1^r=m_1^t$ contradicting the assumption that $m_1^r\neq m_1^t$.
If on the other hand, $m_1^i-m_1^t=c_0$ proceeding in the same way and now using that $b\neq 0$ we get as before $m_1^r=m_1^t$ contradicting the initial assumption.
 
To prove that $m_2^r=m_2^t$, we proceed in the same way but using \eqref{eq: equation equals zero for second components BIS}.

To complete the proof of (i), since $m_1^t= m_1^r$ and $m_2^t=m_2^r$, substituting these into \eqref{eq:general exponential equation a b c} yields
\begin{equation}\label{eq:a exponential + (b+c) exponential =0}
a \,e^{\i\,\(m_1^i  x_1+m_2^i x_2\)}
+(b+c) \,e^{\i\,\(m_1^r x_1+m_2^r x_2+\psi(x_1,x_2)\)}=0.
\end{equation}
Differentiating this identity with respect to $x_1$ \cristian{and dividing by $\i$} yields
\begin{equation*}
a \,m_1^i\,e^{\i\,\(m_1^i  x_1+m_2^i x_2\)}
+(b+c) \,\(m_1^r+\psi_{x_1}\)\,e^{\i\,\(m_1^r x_1+m_2^r x_2+\psi(x_1,x_2)\)}=0.
\end{equation*}
That is, we obtain the system of equations
\[
\(
\begin{matrix}
1 & 1\\
m_1^i & m_1^r+\psi_{x_1}
\end{matrix}
\)
\(
\begin{matrix}
a \,e^{\i\,\(m_1^i  x_1+m_2^i x_2\)}\\
(b+c)\,e^{\i\,\(m_1^r x_1+m_2^r x_2+\psi(x_1,x_2)\)}
\end{matrix}
\)=0.
\]
Since $a\neq 0$, the vector $\(\begin{matrix}
a \,e^{\i\,\(m_1^i  x_1+m_2^i x_2\)}\\
(b+c)\,e^{\i\,\(m_1^r x_1+m_2^r x_2+\psi(x_1,x_2)\)}
\end{matrix}\)$ is a non trivial solution to the system 
and so $\det \(
\begin{matrix}
1 & 1\\
m_1^i & m_1^r+\psi_{x_1}
\end{matrix}
\)=m_1^r+\psi_{x_1}-m_1^i=0$ for all $x_1,x_2$ as desired.

Differentiating \eqref{eq:a exponential + (b+c) exponential =0} with respect to $x_2$ in the same way we obtain that 
$m_2^r+\psi_{x_2}-m_2^i=0$ completing the proof of (i).

{\it Proof of (ii).} Since $a=0$, we have simplifying the exponential $e^{\i\,\psi(X)}$ in \eqref{eq:general exponential equation a b c} that 
\begin{equation*}
b \,e^{\i\,\m_r\cdot X}
+
c \,e^{\i\,\m_t\cdot X}=0.
\end{equation*}
Differentiating the last equation with respect to $x_1$ and letting $x_2=0$ yields
\[
b\,m_1^r \,e^{\i\,m_1^r\,x_1}
+
c \,m_1^t\,e^{\i\,m_1^t\,x_1}=0.
\]
So we get the system
\[
\(
\begin{matrix}
1 & 1\\
m_1^r & m_1^t
\end{matrix}
\)
\(
\begin{matrix}
b\,e^{\i\,m_1^r\,x_1}\\
c \,e^{\i\,m_1^t\,x_1}
\end{matrix}
\)
=0,
\]
and since $c\neq 0$ the determinant of the last matrix must be zero and so $m_1^r=m_1^t$.
Similarly, differentiating with respect to $x_2$ and letting $x_1=0$ we obtain that $m_2^r=m_2^t$.

{\it Proof of (iii).} We have
\begin{equation*}
a\,e^{\i\,\m_i\cdot X}
+c\,e^{\i\,\left(\m_t\cdot X+\psi(X)\right)}=0,
\end{equation*}
and differentiating with respect to $x_1$ \cristian{and dividing by $\i$} yields
\[
m_1^i\,a\,e^{\i\,\m_i\cdot X}
+\(m_1^t+\psi_{x_1}\)c\,e^{\i\,\left(\m_t\cdot X+\psi(X)\right)}=0,
\]
and so
\[
\(
\begin{matrix}
1 & 1\\
m_1^i & m_1^t+\psi_{x_1}
\end{matrix}
\)
\(
\begin{matrix}
a\,e^{\i\,\m_i\cdot X}\\
c \,e^{\i\,\(\m_t\cdot X+\psi(X)\)}
\end{matrix}
\)
=0.
\]
Since the vector $\(
\begin{matrix}
a\,e^{\i\,\m_i\cdot X}\\
c \,e^{\i\,\(\m_t\cdot X+\psi(X)\)}
\end{matrix}
\)\neq 0$, the determinant of the matrix equals zero, i.e., $m_1^t+\psi_{x_1}(x_1,x_2)=m_1^i$
for all $X$ and therefore $\psi_{x_1}$ is constant, that is, $\psi(x_1,x_2)=c_0\,x_1+g(x_2)$. 
Substituting the value of $\psi$ in the original equation yields
\begin{equation*}
a\,e^{\i\,\m_i\cdot X}
+c\,e^{\i\,\left(m_1^t\,x_1+m_2^t\,x_2+c_0\,x_1+g(x_2)\right)}=0,
\end{equation*}
which differentiated with respect to $x_2$ gives
\begin{equation*}
m_2^i\,a\,e^{\i\,\m_i\cdot X}
+\(m_2^t+g'(x_2)\)\,c\,e^{\i\,\left(m_1^t\,x_1+m_2^t\,x_2+c_0\,x_1+g(x_2)\right)}=0.
\end{equation*}
Therefore 
\[
\(
\begin{matrix}
1 & 1\\
m_2^i & m_2^t+g'(x_2)
\end{matrix}
\)
\(
\begin{matrix}
a\,e^{\i\,\m_i\cdot X}\\
c \,e^{\i\,\(m_1^t\,x_1+m_2^t\,x_2+c_0\,x_1+g(x_2)\)}
\end{matrix}
\)
=0.
\]
Again, since the vector $\(
\begin{matrix}
a\,e^{\i\,\m_i\cdot X}\\
c \,e^{\i\,\(m_1^t\,x_1+m_2^t\,x_2+c_0\,x_1+g(x_2)\)}
\end{matrix}
\)\neq 0$, we obtain $m_2^t+g'(x_2)=m_2^i$, implying that $g'(x_2)$ is constant and so $g(x_2)=c_1\,x_2+c_2$ and we are done.

This proof of (iv) is the same as the proof of (iii).
\end{proof}

\section{Conclusion}
An analysis of the Maxwell system of electrodynamics in the context of distributions is carried out, leading to the derivation of boundary conditions for the electromagnetic field when the current and charge densities are localized at the interface. Consequently, by representing the electric field as a nonlinear perturbation of a plane wave characterized by a phase discontinuity function, the generalized Snell law is obtained. Furthermore, we derive formulas for the amplitudes of the reflected and transmitted waves in terms of the amplitude of the incident wave.

%

\bibliography{references}
\bibliographystyle{amsalpha}
\nocite{2017gutierrez:invariance}
\nocite{2011idemen:book}
\end{document}